\numberwithin{equation}{section}
\newtheorem{theo}{Theorem}
\newtheorem{prop}{Proposition}
\newtheorem{lem}{Lemma}
\theoremstyle{remark}
\newcommand{\tra}{{}^t}
\newcommand{\N}{\mathbb{N}}
\newcommand{\Q}{\mathbb{Q}}
\renewcommand{\C}{\mathbb{C}}
\newcommand{\K}{\mathbb{K}}
\renewcommand{\L}{\mathbb{L}}
\newcommand{\Qbar}{\overline{\mathbb Q}}
\newcommand{\etoile}{^*}
\newcommand{\GB}{Gr\"obner }
\newcommand{\Tsoul}{{\underline{T}}}
\newcommand{\asoul}{{\underline{a}}}
\newcommand{\bsoul}{{\underline{b}}}
\newcommand{\csoul}{{\underline{c}}}
\newcommand{\Xsoul}{{\underline{X}}}
\newcommand{\asj}{{\underline{a}^{(j)}}}
\newcommand{\asjindicei}{{\underline{a}_i^{(j)}}}
\newcommand{\gsoul}{{\underline{g}}}
\renewcommand{\Im}{{\rm Im}}
\newcommand{\lcm}{{\rm lcm}}
\newcommand{\specialll}{l}
\newcommand{\lmon}[1]{{\rm \specialll mon}(#1)}
\newcommand{\lexp}[1]{{\rm \specialll exp}(#1)}
\newcommand{\lcoeff}[1]{{\rm \specialll coeff}(#1)}
\newcommand{\lterm}[1]{{\rm \specialll term}(#1)}
\newcommand{\numlcoeff}[1]{{\rm num \,  \specialll coeff}(#1)}
\newcommand{\num}[1]{{\rm num}(#1)}
\newcommand{\unN}{\{1,\ldots,N\}}
\newcommand{\recoit}{:=}
\newcommand{\alp}{\alpha}
\newcommand{\iti}{\widetilde \imath}
\newcommand{\Iti}{\widetilde I}
\newcommand{\Wend}{W_{{\rm end}}}
\newcommand{\calP}{\mathcal P}
\newcommand{\calR}{\mathcal R}
\newcommand{\calF}{\mathcal F}
\newcommand{\calB}{\mathcal B}
\newcommand{\chialp}{\chi_\alp}
\newcommand{\Pun}{P^{[1]}}
\newcommand{\Pj}{P^{[j]}}
\newcommand{\Ps}{P^{[s]}}
\begin{document}

 \selectlanguage{english}

\title{Effective algebraic independence of values of $E$-functions}
\date\today
\author{S. Fischler and T. Rivoal}
\maketitle

\begin{abstract}
$E$-functions are entire functions with algebraic Taylor coefficients satisfying
certain arithmetic conditions, and which are also solutions of linear differential equations with
coefficients in $\Qbar(z)$. They were introduced by Siegel in 1929 to generalize the Diophantine properties of the exponential and Bessel's functions. The 
Siegel-Shidlovskii Theorem (1956) deals with the algebraic (in)dependence of values at algebraic
points of $E$-functions solutions of a differential system. In this paper, we prove the existence of an algorithm to perfom the following three tasks. Given as inputs some $E$-functions $F_1(z)$, \ldots, $F_p(z)$, 

$(1)$ it computes a system of generators of the ideal of polynomial relations between $F_1(z)$, \ldots, $F_p(z)$ with coefficients in $\Qbar(z)$; 

$(2)$ given any $\alpha\in\Qbar$,  it computes a system of generators of the ideal of polynomial relations between the values  $F_1(\alpha)$, \ldots, $F_p(\alpha)$ with coefficients in $\Qbar$;

$(3)$ if $F_1(z)$, \ldots, $F_p(z)$ are algebraically independent over $\Qbar(z)$,  it determines the finite set of all $\alpha\in\Qbar$ such that the values 
  $F_1(\alpha)$, \ldots, $F_p(\alpha)$ are algebraically dependent over $\Qbar$. 
  
  The existence of this algorithm  relies on a variant of the Hrushovski-Feng algorithm (to compute polynomial relations between solutions of differential systems) and  on Beukers' lifting theorem (an optimal refinement of the Siegel-Shidlovskii theorem) in order to reduce the problem to an effective elimination procedure in multivariate polynomial rings. The latter is then performed using \GB bases.
\end{abstract}

\section{Introduction}

A power series $F(z)=\sum_{n=0}^{\infty} \frac{a_n}{n!} z^n \in \Qbar[[z]]$ is an $E$-function  if  
\begin{enumerate}
\item[$(i)$] $F(z)$ is solution of a non-zero linear differential equation with coefficients in 
$\Qbar(z)$.
\item[$(ii)$] There exists $C>0$ such that for any $\sigma\in \textup{Gal}(\Qbar/\mathbb Q)$ and any $n\ge 0$,  $\vert \sigma(a_n)\vert \leq C^{n+1}$.
\item[$(iii)$] There exists $D>0$ and a sequence of integers $d_n$, with $1\le  d_n  \leq D^{n+1}$, such that
$d_na_m\in \mathcal{O}_{\Qbar}$ for all~$m\le n$.
\end{enumerate}
Above and below, we fix an embedding of $\Qbar$ into $\mathbb C$.
Siegel introduced in 1929 the notion of $E$-function as a generalization of the exponential and Bessel functions. His definition was in fact slightly more general than above (see the end of this introduction). Note that  $(i)$ implies that the $a_n$'s all lie in a certain   number field $\K$, so that in $(ii)$ there are only finitely many Galois conjugates $\sigma(a_n)$ of $a_n$ to consider, with  $\sigma\in \textup{Gal}(\mathbb K/\mathbb Q)$ (assuming for simplicity that $\K$ is a Galois extension of $\Q$).  An $E$-function  is transcendental over $\mathbb C(z)$ if and only if $a_n\neq 0$ for infinitely many $n$. For more informations about $E$-functions, we refer the reader to the survey \cite{rivoalsurvol}.

 Siegel proved in \cite{sieg29} a result on the Diophantine nature of the values taken by Bessel functions  at algebraic points. He generalized it to $E$-functions in 1949 in \cite{sieg} under a technical hypothesis ({\em Siegel's normality}), which  was eventually removed  by Shidlovskii in 1959, see~\cite{shid59}.  
\begin{theo}[Siegel-Shidlovskii]\label{thm:sish}
Let $Y(z)=\tra(F_1(z), \ldots, F_n(z))$ be a vector of $E$-functions such that 
$Y'(z)=A(z)Y(z)$ where $A(z)\in M_n(\Qbar(z)).$
Let $T(z)\in \Qbar[z]\setminus\{0\}$ be such that $T(z)A(z)\in M_n(\Qbar[z])$. 
Then for any $\alpha\in \Qbar$ such that $\alpha T(\alpha)\neq 0$, 
$$
\textup{degtr}_{\Qbar} \Qbar(F_1(\alpha), \ldots, F_{n}(\alpha)) =\textup{degtr}_{\Qbar(z)}\Qbar(z)(F_1(z), \ldots, F_{n}(z)).
$$
\end{theo}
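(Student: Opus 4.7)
My plan is to split the theorem into the two inequalities hidden by the equal sign. The inequality $\textup{degtr}_{\Qbar} \Qbar(F_1(\alpha),\ldots,F_n(\alpha)) \leq \textup{degtr}_{\Qbar(z)}\Qbar(z)(F_1,\ldots,F_n)$ is elementary: any algebraic relation among $F_1(z),\ldots,F_n(z)$ over $\Qbar(z)$ may be cleared of denominators using a suitable power of $T(z)$, and then specialized at $\alpha$; the hypothesis $\alpha T(\alpha)\neq 0$ ensures that the specialization does not degenerate, so $n-m$ independent functional relations descend to $n-m$ independent relations among values. The substance of the theorem lies in the opposite inequality.

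For the reverse inequality, I would first reduce to a statement about linear independence. Set $m = \textup{degtr}_{\Qbar(z)}\Qbar(z)(F_1,\ldots,F_n)$ and reorder so that $F_1,\ldots,F_m$ are algebraically independent over $\Qbar(z)$; it suffices to show that $F_1(\alpha),\ldots,F_m(\alpha)$ are algebraically independent over $\Qbar$. An algebraic relation of degree $\leq L$ among these values corresponds to a $\Qbar$-linear relation among the $N=\binom{m+L}{m}$ values of the monomials $F_1^{i_1}\cdots F_m^{i_m}$ of total degree $\leq L$. These monomials are themselves $E$-functions and form the components of a solution vector of the $L$-th symmetric power of $Y'=AY$, a linear system of rank $N$ over $\Qbar(z)$ in which they are linearly independent. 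Thus it is enough to establish the \emph{linear} form of Siegel-Shidlovskii: if $G_1,\ldots,G_N$ are $E$-functions solving a linear differential system $Z'=BZ$ and linearly independent over $\Qbar(z)$, then $G_1(\alpha),\ldots,G_N(\alpha)$ are linearly independent over $\Qbar$, provided $\alpha$ avoids the singularities of $B$ and is nonzero.

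The proof of this linear statement follows Siegel's scheme. For a large integer parameter $M$, Siegel's lemma (pigeonhole over number fields) applied to the Taylor coefficients of the $G_j$ produces a nonzero vector $(P_1,\ldots,P_N)\in\Qbar[z]^N$ of bounded degree and controlled height such that the remainder $R(z)=\sum_{j=1}^N P_j(z)G_j(z)$ has a zero of high order at $z=0$; the arithmetic axioms $(ii)$ and $(iii)$ on $E$-functions provide exactly the input Siegel's lemma needs. Differentiating $R$ and using $Z'=BZ$ to re-express derivatives of the $G_j$ as $\Qbar(z)$-linear combinations of $G_1,\ldots,G_N$ produces a large family of linear forms $R^{(k)}(z)=\sum_j P_{j,k}(z)G_j(z)$ (up to an appropriate denominator-clearing polynomial), whose coefficient polynomials remain of controlled degree and height.

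The crucial ingredient is Shidlovskii's nonvanishing lemma: from this family one can extract $N$ linear forms whose coefficient matrix has a determinant that is a nonzero rational function in $z$, with zeros and poles bounded polynomially in $M$. Granting this lemma, the proof concludes by the classical arithmetic contradiction: a hypothetical nontrivial $\Qbar$-linear relation among the $G_j(\alpha)$, combined with $N-1$ of the extracted rows evaluated at $z=\alpha$, yields a nonzero algebraic number which is on the one hand extremely small (from the high vanishing order of $R^{(k)}$ at $z=0$ and standard analytic estimates) and on the other hand not too small (from a Liouville-type lower bound using the controlled height); taking $M$ sufficiently large produces the desired contradiction. I expect Shidlovskii's lemma to be the principal obstacle: its proof is the delicate combinatorial and differential-algebraic heart of the argument, and it is precisely what allowed Shidlovskii to remove Siegel's ``normality'' hypothesis.
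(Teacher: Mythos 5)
This theorem is quoted in the paper as a classical result, attributed to Siegel and Shidlovskii, with references to \cite{sieg29,sieg,shid59}; the paper gives no proof of it. There is therefore no in-paper argument to compare against, so I will assess your sketch on its own.

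Your overall plan for the substantial inequality $\textup{degtr}_\Qbar \Qbar(F_1(\alpha),\ldots,F_n(\alpha)) \geq \textup{degtr}_{\Qbar(z)}\Qbar(z)(F_1,\ldots,F_n)$ is the standard one: reduce algebraic independence to linear independence of monomials in a symmetric power of the system, then run Siegel's construction (auxiliary approximation forms with high-order vanishing, differentiation along the system, Shidlovskii's nonvanishing lemma to extract an invertible square block, and the height-versus-size contradiction). You correctly identify Shidlovskii's lemma as the genuine obstacle. One inaccuracy: the $L$-th symmetric power of $Y'=AY$ has size governed by $n$, not by $m$, so its rank is not $\binom{m+L}{m}$; what is true is that the $\binom{m+L}{m}$ monomials in $F_1,\ldots,F_m$ sit among its components and are $\Qbar(z)$-linearly independent (this being equivalent to the algebraic independence of $F_1,\ldots,F_m$), and Shidlovskii's linear theorem does apply to a linearly independent \emph{subset} of the solution vector's components.

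The genuine gap is in what you call the elementary direction $\textup{degtr}_\Qbar \Qbar(F_1(\alpha),\ldots,F_n(\alpha)) \leq \textup{degtr}_{\Qbar(z)}\Qbar(z)(F_1,\ldots,F_n)$. Your justification --- clear denominators of an algebraic relation by a power of $T(z)$ and specialize, with $\alpha T(\alpha)\neq 0$ preventing degeneration --- is incorrect: $T$ is chosen to clear the poles of the connection matrix $A$, and has no relation whatsoever to the denominators appearing in an algebraic relation $P(z,F_1,\ldots,F_n)=0$, which can be arbitrary polynomials in $z$. Moreover, even after clearing all denominators, the specialized polynomial $P(\alpha,X_1,\ldots,X_n)$ may vanish identically. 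The direction is nonetheless genuinely elementary, but via a different route: let $V\subset\mathbb{A}^1\times\mathbb{A}^n$ be the Zariski closure of the analytic curve $z\mapsto(z,F_1(z),\ldots,F_n(z))$. Then $V$ is irreducible of dimension $m+1$ where $m$ is the transcendence degree of the functions, the projection $V\to\mathbb{A}^1$ is dominant, and the point $(\alpha,F_1(\alpha),\ldots,F_n(\alpha))$ lies in the proper closed fiber $V_\alpha$, whose dimension is at most $m$; hence the transcendence degree of the values is at most $m$. No hypothesis on $\alpha$ is needed for this half.
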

The next step was the following result~\cite{ns} which essentially says that a numerical polynomial relation between values of $E$-functions   at an algebraic point cannot be sporadic and must arise from a functional counterpart between these $E$-functions.
\begin{theo}[Nesterenko-Shidlovskii, 1996]\label{thm:ns} With  the  notations of Theorem \ref{thm:sish}, 
there exists a finite set  $S$ (depending a priori on $Y(z)$) such that  for any $ \alpha\in \Qbar\setminus S$,  the following holds. For any homogeneous polynomial $P\in \Qbar[X_1,\ldots, X_n]$ such that $P(F_1( \alpha),\ldots, F_n( \alpha))=0$, there exists a polynomial $Q\in \Qbar[Z, X_1, \ldots, X_n]$, homogeneous in the variables $X_1,\ldots,X_n$, such that $Q(\alpha, X_1,\ldots, X_n)=P(X_1,\ldots, X_n)$ and
$
Q(z,F_1(z),\ldots, F_n(z))=0.
$
\end{theo}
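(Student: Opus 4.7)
The strategy is to reduce the theorem in each fixed homogeneous degree $d$ to a \emph{linear} lifting statement, by passing to the $d$-th symmetric power of $Y'=AY$. Since $E$-functions form a ring, for each multi-index $\mu=(\mu_1,\ldots,\mu_n)\in\N^n$ with $|\mu|=d$ the monomial $F^\mu:=F_1^{\mu_1}\cdots F_n^{\mu_n}$ is again an $E$-function. Collecting the $N_d:=\binom{n+d-1}{d}$ such monomials into a column vector $Y^{[d]}(z)$, differentiation together with $Y'=AY$ yields a linear system $(Y^{[d]})'=A^{[d]}(z)\,Y^{[d]}$ for an explicit $A^{[d]}\in M_{N_d}(\Qbar(z))$ whose denominator divides a power of $T$. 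Under this correspondence, a homogeneous polynomial $P\in\Qbar[X_1,\ldots,X_n]$ of degree $d$ with $P(F(\alpha))=0$ is nothing but a $\Qbar$-linear relation among the entries of $Y^{[d]}(\alpha)$; and a lift $Q(z,X)\in\Qbar[z,X_1,\ldots,X_n]$, homogeneous of degree $d$ in $X$, with $Q(\alpha,X)=P(X)$ and $Q(z,F(z))=0$ is precisely a $\Qbar(z)$-linear relation among the entries of $Y^{[d]}(z)$ specializing to $P$ at $\alpha$.

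The next step is to invoke the linear lifting result for solutions of a linear differential system of $E$-functions (the degree-one case of Theorem \ref{thm:ns}, contained in Shidlovskii's refinement of the proof of Theorem \ref{thm:sish} through a Wronskian argument): given a vector of $E$-functions $Z(z)$ satisfying $Z'=BZ$, there is a finite set $S'\subset\Qbar$ such that, for every $\alpha\in\Qbar\setminus S'$, any $\Qbar$-linear relation among the entries of $Z(\alpha)$ is the specialization at $\alpha$ of a $\Qbar(z)$-linear relation among the entries of $Z(z)$. Applied to $Z=Y^{[d]}$, this produces a finite set $S_d$ outside of which Theorem \ref{thm:ns} holds for every homogeneous $P$ of degree $d$. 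The key quantitative input is Theorem \ref{thm:sish} applied to $Y^{[d]}$, which equates the transcendence degrees over $\Qbar(z)$ and over $\Qbar$ of the fields generated by the $F^\mu(z)$ and $F^\mu(\alpha)$ respectively; this coincidence forces the $\Qbar(z)$- and $\Qbar$-linear spans of those monomials to have the same dimension outside $S_d$, which is the prerequisite for the linear lifting to succeed.

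To promote the sets $S_d$ to a single finite $S$ valid in all degrees, I would appeal to Noetherianity: the ideal $\mathfrak I\subset\Qbar(z)[X_1,\ldots,X_n]$ of polynomial relations among $F_1(z),\ldots,F_n(z)$ is finitely generated, so some integer $D$ bounds the degrees of a set of homogeneous generators. Setting $S:=S_1\cup\cdots\cup S_D$ and $\mathcal J_\alpha:=\{P\in\Qbar[X_1,\ldots,X_n]:P(F(\alpha))=0\}$, the specialized generators lie in $\mathcal J_\alpha$ and, for $\alpha\notin S$, span its homogeneous parts in degrees $\leq D$. The main obstacle I expect is to propagate this equality to all degrees, that is, to show that $\mathcal J_\alpha$ itself is generated in degree $\leq D$ for $\alpha$ outside a finite set. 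This amounts to a Hilbert-function (or Castelnuovo-Mumford regularity) semicontinuity statement for the fibers at $\alpha$ of the projective family over $\mathbb A^1_z$ cut out by $\mathfrak I$; it is standardly handled via generic flatness, at the cost of enlarging $S$ by finitely many further points.
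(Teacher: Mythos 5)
First, a structural remark: the paper does not prove Theorem~\ref{thm:ns}; it is imported as a known result from \cite{ns}, so there is no in-paper argument to compare against. Evaluating your proposal on its own terms: the reduction to a fixed degree $d$ via the symmetric-power system $(Y^{[d]})' = A^{[d]} Y^{[d]}$ is correct, as is the Noetherianity observation that the relation ideal $\mathfrak I \subset \Qbar(z)[X_1,\ldots,X_n]$ is generated by homogeneous polynomials of degree at most some $D$. You also correctly isolate the crux: promoting the lifting property from degrees $\leq D$ to all degrees, equivalently showing that the ideal $\mathcal J_\alpha$ of \emph{all} relations among $F_1(\alpha),\ldots,F_n(\alpha)$ is generated in degree $\leq D$ for $\alpha$ outside a finite set. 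But the fix you sketch — Hilbert-function or Castelnuovo--Mumford regularity semicontinuity in the family over $\mathbb A^1_z$ cut out by $\mathfrak I$ — controls only the \emph{specialized} ideal $J_\alpha := (G_1(\alpha,\cdot),\ldots,G_k(\alpha,\cdot))$, where the $G_i$ generate $\mathfrak I$, and not $\mathcal J_\alpha$ itself. One always has $J_\alpha \subset \mathcal J_\alpha$, and your linear liftings in degree $\leq D$ give equality there; but two homogeneous ideals can agree in all degrees $\leq D$, with the smaller one generated in degree $\leq D$, and still differ in higher degree (e.g.\ $(X_1^2) \subsetneq (X_1^2, X_2^3)$ in $\Qbar[X_1,X_2]$ with $D=2$). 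So flatness applied to the family alone does not close the gap; it proves the wrong ideal is nicely generated.

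What is actually needed is an argument forcing $\mathcal J_\alpha = J_\alpha$. One way to supply it: generic flatness gives $\dim \Qbar[X_1,\ldots,X_n]/J_\alpha = \dim \Qbar(z)[X_1,\ldots,X_n]/\mathfrak I$ off a finite set; Theorem~\ref{thm:sish} gives $\dim \Qbar[X_1,\ldots,X_n]/\mathcal J_\alpha = \dim \Qbar(z)[X_1,\ldots,X_n]/\mathfrak I$ off a finite set; and the set of $\alpha$ where the fiber $\Qbar[X_1,\ldots,X_n]/J_\alpha$ is (geometrically) integral is the complement of a finite set, \emph{provided} the generic fiber is geometrically integral over $\Qbar(z)$. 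The latter amounts to $\Qbar(z)$ being algebraically closed in $\Qbar(z)(F_1,\ldots,F_n)$, which holds because any element of that field algebraic over $\Qbar(z)$ is a single-valued meromorphic algebraic function, hence rational. Then $J_\alpha \subset \mathcal J_\alpha$ are both prime of the same Krull dimension, so they coincide, which is exactly the theorem. Note that once one argues this way, the symmetric-power reduction and the degree-one Wronskian lifting become superfluous; the real inputs are Theorem~\ref{thm:sish}, generic flatness, and geometric integrality of the generic fiber, and it is precisely the third that your write-up leaves out.
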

The indetermination of the set $S$ is a problem.  It was lifted by Beukers~\cite{Beukers} using Andr\'e's theory of $E$-operators~\cite{andre}. 
\begin{theo}[Beukers, 2006]\label{theo:beukers} With   the  notations of Theorems \ref{thm:sish} and \ref{thm:ns}, one may choose  $S=\{\alpha \in \Qbar : \alpha T(\alpha)= 0\}$.
\end{theo}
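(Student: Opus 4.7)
The strategy is to remove the ambiguity in the exceptional set $S$ of Theorem~\ref{thm:ns} by a local analysis at each regular point $\alpha\in\Qbar$ of the system, i.e., each $\alpha$ with $\alpha T(\alpha)\neq 0$. The key input is André's theory of $E$-operators~\cite{andre}: the minimal differential operator annihilating an $E$-function is an $E$-operator, whose only finite singularity is located at $0$. This structural constraint will let us convert local holomorphic solutions at $\alpha$ into genuine $E$-functions after translation to the origin, so that the improvement over Theorem~\ref{thm:ns} reduces to applying the existing statement to a translated system in which the point of interest becomes regular.

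The first and principal step is Beukers' lifting lemma. Fix $\alpha$ with $\alpha T(\alpha)\neq 0$, and let $\mathcal{Y}(z)$ be the unique holomorphic fundamental matrix of $Y'=AY$ in a neighbourhood of $\alpha$, normalized by $\mathcal{Y}(\alpha)=I_n$, so that $F_i(z)=\sum_j \mathcal{Y}_{ij}(z)F_j(\alpha)$ near $\alpha$. The claim to prove is that each translate $G_{ij}(w)\recoit\mathcal{Y}_{ij}(w+\alpha)$ is again an $E$-function in $w$. This is the main obstacle: Taylor coefficients of holomorphic solutions at a generic regular point satisfy neither the Galois-growth condition $(ii)$ nor the denominator condition $(iii)$, and the fact that they do in the present situation is a rigidity property of $E$-operators. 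The proof proceeds by choosing a cyclic vector for the $\Qbar(z)\langle d/dz\rangle$-module generated by the $F_i$, invoking André's theorem to conclude that its minimal annihilator is an $E$-operator whose only finite singularity is $0$, and transferring the arithmetic estimates from the expansion at $0$ to the expansion at $\alpha$ via the regularity of $A$ at $\alpha$ and explicit control of the local connection matrix.

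Granting the lifting, Theorem~\ref{theo:beukers} follows by a comparatively formal argument. Given a homogeneous $P\in\Qbar[X_1,\ldots,X_n]$ with $P(F_1(\alpha),\ldots,F_n(\alpha))=0$, the identities $F_i(w+\alpha)=\sum_j G_{ij}(w)F_j(\alpha)$ express the shifted values as $\Qbar$-linear combinations of the $E$-functions $G_{ij}(w)$ with $G_{ij}(0)=\delta_{ij}$. The vector $(G_{ij}(w))$ satisfies its own differential system over $\Qbar(w)$, obtained from $Y'=AY$ by translation and tensor product; the companion polynomial of this translated system is $T(w+\alpha)$, so $w=0$ is a regular point of the new system. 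Applying Theorem~\ref{thm:ns} to $(G_{ij}(w))$ at an auxiliary base point $\beta\neq 0$ lying outside the finite exceptional set for the translated system, and propagating the resulting functional relation back to $w=0$ via the linear identities above, produces a polynomial identity in $w$ among the $G_{ij}(w)$ that specializes correctly at $w=0$ to encode the vanishing of $P$ at $(F_1(\alpha),\ldots,F_n(\alpha))$. Unwinding the $\Qbar$-linear combinations and setting $z=w+\alpha$ yields the desired $Q(z,X_1,\ldots,X_n)\in\Qbar[z,X_1,\ldots,X_n]$, homogeneous in the $X_i$, with $Q(\alpha,X)=P(X)$ and $Q(z,F_1(z),\ldots,F_n(z))=0$. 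Since the only hypothesis on $\alpha$ used throughout is $\alpha T(\alpha)\neq 0$, one may take $S=\{\alpha\in\Qbar:\alpha T(\alpha)=0\}$.
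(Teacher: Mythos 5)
The paper does not prove Theorem~\ref{theo:beukers}: it is quoted from~\cite{Beukers}, so there is no proof in the source to compare against. Your blind attempt, however, has a genuine gap at its central step, namely the claim that the translated entries $G_{ij}(w)\recoit\mathcal{Y}_{ij}(w+\alpha)$ of the local fundamental matrix normalized by $\mathcal{Y}(\alpha)=I_n$ are $E$-functions.

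These functions need not even be entire. For a system $Y'=AY$ arising from genuine $E$-functions, the matrix $A$ typically has a pole at $z=0$ (e.g.\ the Bessel system for $J_0, J_0'$, where $T(z)=z$), and the monodromy of the system around $0$ is then nontrivial: a local fundamental matrix based at a regular point $\alpha\neq 0$ continues to a \emph{multivalued} function around the singularity, so $\mathcal{Y}_{ij}(w+\alpha)$ has a branch point at $w=-\alpha$. A non-entire function cannot be an $E$-function no matter what arithmetic control one has on its Taylor coefficients, so no ``rigidity of $E$-operators'' can rescue the claim. The cases $e^z$ and $\cos z$ are misleading here precisely because their companion systems have no finite singularity at all. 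André's theorem concerns the minimal operator of a single $E$-function and says nothing about the auxiliary, non-entire columns of a local fundamental matrix. Beukers' actual route is different in kind: by \cite[Theorem~1.5]{Beukers} (which the paper invokes in \S\ref{sec:reduc23commalg}), one replaces $F_1,\ldots,F_n$, assumed $\Qbar(z)$-linearly independent, by $E$-functions $g_1,\ldots,g_n$ with $F=Mg$ for a polynomial matrix $M\in M_n(\Qbar[z])$, where the new system $g'=Bg$ has $0$ as its \emph{only} finite singularity; every $\alpha\in\Qbar\etoile$ is then a regular point of the $g$-system and the lifting goes through globally with no auxiliary base point. Incidentally, even granting your lifting lemma, the reduction in your second step is circular: choosing $\beta\neq 0$ outside the (unknown, noneffective) exceptional set of Theorem~\ref{thm:ns} for the translated system presupposes exactly the effectivity you are trying to establish.
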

A natural question is whether it is possible to determine algorithmically the (in)existence of a polynomial relation between values of given $E$-functions at algebraic points, and between these $E$-functions themselves. A difficult point is that we may be interested in $E$-functions $F_1,\ldots,F_p$ for which the vector $Y(z)=\tra(F_1(z), \ldots, F_p(z))$ {\em is not }  a solution of any differential system of the form $Y'(z)=A(z)Y(z)$.  Our main result answers this question.
\begin{theo} \label{th1} There exists an algorithm to perform the following three tasks. Given as inputs an integer $p\ge 1$ and some $E$-functions $F_1(z)$, \ldots, $F_p(z)$, 
\begin{enumerate}
\item[$(i)$] it computes a system of generators of the ideal of polynomial relations between $F_1(z)$, \ldots, $F_p(z)$ with coefficients in $\Qbar(z)$; 
\item[$(ii)$] given any $\alp\in\Qbar$, it computes a system of generators of the ideal of polynomial relations between the values  $F_1(\alp)$, \ldots, $F_p(\alp)$ with coefficients in $\Qbar$;
\item[$(iii)$] if $F_1(z)$, \ldots, $F_p(z)$ are algebraically independent over $\Qbar(z)$,  it determines the finite set of all $\alp\in\Qbar$ such that the values 
  $F_1(\alp)$, \ldots, $F_p(\alp)$ are algebraically dependent over $\Qbar$.
\end{enumerate}
\end{theo}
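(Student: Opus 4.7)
The plan is to reduce all three tasks to a single effective computation on an enlarged differential system of $E$-functions, then use Beukers' theorem to transfer functional relations into numerical ones. Since each $F_i$ satisfies a linear ODE with coefficients in $\Qbar(z)$ (either specified with the input or recovered from a recurrence on the Taylor coefficients), the companion vector $Y=\tra(1,F_1,F_1',\ldots,F_1^{(n_1-1)},\ldots,F_p,\ldots,F_p^{(n_p-1)})$ is a vector of $N$ $E$-functions obeying $Y'=AY$ with $A\in M_N(\Qbar(z))$, and a polynomial $T\in\Qbar[z]\setminus\{0\}$ with $TA\in M_N(\Qbar[z])$ is readily available. Including the constant function $1$ among the components lets me convert inhomogeneous polynomial relations into homogeneous ones by standard homogenization, which is crucial in order to apply Theorem~\ref{theo:beukers} later.

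For task $(i)$, I would apply a variant of the Hrushovski-Feng algorithm to produce a finite generating family of the ideal $\mathcal I\subset\Qbar(z)[Y_0,\ldots,Y_{N-1}]$ of all polynomial relations satisfied by the components of $Y$. A \GB elimination of the derivative variables, followed by the substitution $Y_0=1$, then yields generators of the ideal of algebraic relations among $F_1,\ldots,F_p$ over $\Qbar(z)$.

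For task $(ii)$, fix $\alpha\in\Qbar$. When $\alpha T(\alpha)\ne 0$, Theorem~\ref{theo:beukers} asserts that every homogeneous relation over $\Qbar$ among the components of $Y(\alpha)$ is the specialization at $z=\alpha$ of a homogeneous element of $\mathcal I$ (after clearing $z$-denominators). Hence the ideal of numerical relations among $F_1(\alpha),\ldots,F_p(\alpha)$ is obtained by clearing denominators in $\mathcal I$, specializing at $z=\alpha$, eliminating the derivative variables, and setting $Y_0=1$. The remaining values of $\alpha$ lie in the explicit finite set $\{0\}\cup\{T=0\}$: at $\alpha=0$, the $F_i(0)=a_0^{(i)}$ are explicit algebraic numbers, and computing the ideal of $\Qbar$-algebraic relations among finitely many given algebraic numbers is classical; at a non-zero root of $T$, I would either augment $Y$ with auxiliary $E$-functions so that the new companion matrix becomes regular at $\alpha$, or exploit Andr\'e's structural theorems on $E$-operators in order to describe directly the $\Qbar$-algebra generated by $F_1(\alpha),\ldots,F_p(\alpha)$, thereby reducing to the generic case.

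Task $(iii)$ is then a direct consequence: algebraic independence of $F_1,\ldots,F_p$ over $\Qbar(z)$ is detected in step $(i)$ by the triviality of the projected ideal, and Theorem~\ref{theo:beukers} forces any algebraic relation among the values $F_i(\alpha)$ to live above $\alpha\in\{0\}\cup\{T=0\}$, a finite explicit set on which we test each element by running $(ii)$. The central technical obstacle in this program is the treatment of $(ii)$ at the non-zero zeros of $T$, where Beukers' lifting is unavailable: one needs an effective way to reduce these finitely many cases to the generic one, which demands a careful construction of auxiliary $E$-functions and a precise control of the local structure of the system at these singularities. The remaining ingredients — the variant of Hrushovski-Feng and the \GB eliminations — are purely algebraic and effective once the system is in hand.
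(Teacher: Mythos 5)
Your outline for part $(i)$ matches the paper (Hrushovski--Feng, then Gr\"obner elimination), and your overall strategy of enlarging to a companion system and invoking Beukers' lifting is the right one. But you have \emph{identified} the two hard points rather than \emph{closed} them. For the non-zero singularities you offer two loosely sketched alternatives and then call this the central obstacle; the paper's mechanism is precisely Beukers' desingularization result \cite[Theorem~1.5]{Beukers}, which produces $E$-functions $g_1,\ldots,g_N$ and polynomials $m_{j,k}(z)$ with $F_j=\sum_k m_{j,k}g_k$ and with the $g$-system having $z=0$ as its \emph{only} finite singularity. After this replacement, Beukers' lifting theorem applies uniformly to every $\alp\neq 0$, at the cost of re-expressing each $F_j$ as a linear form $\varphi_j(\alp,\Xsoul)$ in the new coordinates -- which turns $(ii)$ and $(iii)$ into a purely commutative-algebra problem (Proposition~\ref{propclef}).

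The deeper gap is in your argument for $(iii)$. Beukers lifts $S(F_1(\alp),\ldots,F_p(\alp))=0$ to $Q(z,X_1,\ldots,X_N)$ with $Q(z,Y(z))\equiv 0$ and $Q(\alp,\Xsoul)=S(X_1,\ldots,X_p)$, but for $z\neq\alp$ the polynomial $Q$ may genuinely involve the derivative variables $X_{p+1},\ldots,X_N$; it therefore need not produce a functional relation among $F_1,\ldots,F_p$ alone. In other words, $I\cap\Qbar(z)[X_1,\ldots,X_p]=\{0\}$ does not immediately imply $I_\alp\cap\Qbar[X_1,\ldots,X_p]=\{0\}$, because specialization at $z=\alp$ does not commute with elimination of variables. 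This is exactly what the authors flag as the crux: no finite set containing the exceptional $\alp$ is known in advance when $N>p$, and constructing one is the hardest part of the proof. The paper resolves it via Proposition~\ref{proputile}: a parametrized Buchberger elimination (Algorithm~2), run over $\K(z)$ with an elimination order adapted to the $\varphi_j$'s, which records the numerators of every leading coefficient encountered; their least common multiple $W\in\K[z]\setminus\{0\}$ is the polynomial whose zero set contains all bad $\alp$, since outside it the specialized elimination follows the generic one step by step and returns the zero ideal. Your proposal contains no analogue of this uniformization, and without it parts $(ii)$ (at singular points) and $(iii)$ remain unproved.
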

See \cite[\S 2.1]{AR} for an explanation of how an $E$-function is {\em given} by a differential equation with coefficients in $\Qbar(z)$ and sufficiently many Taylor coefficients to compute any of them from the differential equation.
A complex algebraic number  $\beta$ is {\em determined} or {\em computed} if we know  an explicit non-zero polynomial $P\in \mathbb Q[X]$ such that $P(\beta)=0$, together with a 
numerical approximation of $\beta$  sufficiently accurate to distinguish 
 $\beta$ from the other roots of~$P$.
 
In Theorem \ref{th1}, let us assume that the output of the algorithm in $(i)$ is that $F_1(z)$, \ldots, $F_p(z)$ are algebraically independent over $\Qbar(z)$. Though it is not an assumption of Theorem \ref{th1} that $\tra(F_1(z), \ldots, F_p(z))$ be a solution of a differential system $Y'(z)=A(z)Y(z)$ with $A(z)\in M_p(\Qbar(z))$, let us further assume that this is the case. Then, by Theorem \ref{theo:beukers}, the finite set of  algebraic numbers in $(iii)$ is a subset of $\{\alpha \in \Qbar : \alpha T(\alpha)=0\}$; it contains $0$ since $F_1(0)$, \ldots, $F_p(0)$ are algebraic numbers. In other words, the algorithm determines in $(iii)$ which roots $\xi$ of $T$ provide a polynomial relation between $F_1(\xi)$, \ldots, $F_p(\xi)$ with coefficients in $\Qbar$, and then for each $\xi$,  $(ii)$  describes all such relations. The problem in the general setting of Theorem \ref{th1}  is that no finite set $S$ containing the values $\alpha$ of $(iii)$ is known in advance. The most difficult part of the proof of  Theorem \ref{th1} is to construct such a finite set. Moreover, the putative algebraic independence of the functions $F_j$'s can be proven by various {\em ad hoc} means, and not necessarily by the complicated algorithm in $(i)$. The latter is a variation of the Hrushovski-Feng algorithm which, to the best of our knowledge, has not yet been implemented in any computer algebra system.

The case $p=1$ of Theorem \ref{th1} has been proved in \cite{AR}:
\begin{theo}[Adamczewski-R., 2018] \label{theo:AR}
There exists an algorithm to perform the following tasks. Given $F(z)$ an $E$-function as input, it first says whether $F(z)$ is transcendental over $\Qbar(z)$ or not. If it is transcendental, it then outputs the finite list of algebraic numbers $\alpha$ such that $F(\alpha)$ is algebraic,
together with the corresponding list of values $F(\alpha)$.
\end{theo}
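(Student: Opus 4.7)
The plan is to first decide whether $F$ is algebraic over $\Qbar(z)$ and, if it is transcendental, to apply Beukers' refinement of Siegel-Shidlovskii to the augmented vector $\tra(1,F,F',\ldots,F^{(d-1)})$ in order to confine the algebraic numbers $\alpha$ with $F(\alpha)\in\Qbar$ to an effectively computable finite set, from which the algebraic values can then be recovered.

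For the transcendence step, I would use the classical fact that an $E$-function which is algebraic over $\Qbar(z)$ is necessarily a polynomial. Since $F$ is given through a linear differential equation of known order and coefficient degrees, one writes down an a~priori bound $N$ on the degree of $F$ should it be polynomial (coming, for instance, from the indicial equation at infinity), computes enough Taylor coefficients of $F$ from that equation, and checks whether the coefficients beyond index $N$ all vanish: if so, $F$ is a polynomial and its finitely many algebraic zeros (together with the trivial data) solve the problem; otherwise $F$ is transcendental.

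Assume henceforth that $F$ is transcendental, and let $L\in\Qbar(z)[\partial]$ be its minimal monic annihilator, of order $d$, so that $F,F',\ldots,F^{(d-1)}$ are $\Qbar(z)$-linearly independent. The augmented vector $\tra(1,F,F',\ldots,F^{(d-1)})$ satisfies a differential system $Y'=\tilde{A}Y$ with $\tilde{A}\in M_{d+1}(\Qbar(z))$; let $T\in\Qbar[z]\setminus\{0\}$ clear the denominators of $\tilde{A}$. Fix any $\alpha\in\Qbar$ with $\alpha T(\alpha)\neq 0$ and suppose $F(\alpha)=y\in\Qbar$. Then $X_1-yX_0$ is a homogeneous polynomial of degree $1$ in $X_0,\ldots,X_d$ vanishing at the values, so by Theorem~\ref{theo:beukers} applied to this system there exist $q_0,\ldots,q_d\in\Qbar[z]$ with
\[
q_0(z)+q_1(z)F(z)+q_2(z)F'(z)+\cdots+q_d(z)F^{(d-1)}(z)=0
\]
and $q_0(\alpha)=-y$, $q_1(\alpha)=1$, $q_j(\alpha)=0$ for $j\geq 2$. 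The $\Qbar[z]$-module $M$ of all such tuples has rank at most $1$ over $\Qbar[z]$, by $\Qbar(z)$-linear independence of $F,\ldots,F^{(d-1)}$; the existence of $\alpha$ forces this rank to equal $1$. A primitive generator $(c_0,\ldots,c_d)\in\Qbar[z]^{d+1}$ of $M$ can be computed effectively by searching, with increasing degree bounds, for such a relation using sufficiently many Taylor coefficients of $F$. The condition $q_j(\alpha)=0$ for $j\geq 2$, combined with $q_1(\alpha)=1\neq 0$, forces $\alpha$ to be a common zero of $c_2,\ldots,c_d$; these polynomials are not all identically zero, since $c_2=\cdots=c_d=0$ would yield $c_0+c_1F=0$, making $F$ rational and hence, being entire, polynomial, a contradiction. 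Thus the candidate $\alpha$ outside the singular locus lie in a finite, explicit set, and the candidate value $y=-c_0(\alpha)/c_1(\alpha)$ is easily checked against the Taylor expansion of $F$ near $\alpha$.

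The main obstacle I foresee is the treatment of the finitely many singular points $\{\alpha\in\Qbar:\alpha T(\alpha)=0\}$, where Beukers' theorem does not apply and algebraicity of $F(\alpha)$ must be decided by other means. Since $F$ is entire, $F(\alpha)$ can be approximated to arbitrary precision; to certify algebraicity one invokes André's theory of $E$-operators, which constrains the local structure of $F$ at singularities of $L$ and provides an effective search radius (degree and height) for a minimal polynomial of $F(\alpha)$ over $\Q$. Assembling the regular and singular cases produces the full algorithm.
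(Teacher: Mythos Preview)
Your outline follows the same broad arc as the original approach in \cite{AR} (and the $p=1$ case of the present paper's Theorem~\ref{th1}): decide transcendence, use Beukers' lifting theorem to confine the exceptional $\alpha$'s to a finite set, then examine each one. But there are two real gaps.

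First, your search for a primitive generator of the relation module $M$ has no stopping criterion. If $1,F,F',\ldots,F^{(d-1)}$ happen to be $\Qbar(z)$-linearly independent (as for $F=e^z$, where $d=1$), then $M=\{0\}$ and your search never halts. To make this step effective you need an a~priori degree bound on the coefficients of a putative relation, so that failing to find one up to that bound certifies $M=\{0\}$. Such bounds exist (see the references \cite{BRS,BB,BCY} used in the proof of Lemma~\ref{lemlindep}), but you cannot simply ``search with increasing degree bounds'' without them.

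Second, and more seriously, your treatment of the singular points is not a proof. You propose to approximate $F(\alpha)$ numerically and then ``invoke Andr\'e's theory of $E$-operators'' to get an effective degree-and-height bound for a minimal polynomial of $F(\alpha)$. No such bound is provided by Andr\'e's theory; the theory constrains the local monodromy and slopes of $E$-operators, but it does not give you, for a specific singular $\alpha$, a finite search radius that would let you certify transcendence of $F(\alpha)$ from an approximation. The paper (and \cite{AR}) circumvent this entirely by applying Beukers' desingularization result \cite[Theorem~1.5]{Beukers}: one replaces $F,\ldots,F^{(d-1)}$ by $E$-functions $g_1,\ldots,g_N$ related to them by a polynomial change of basis, such that the new differential system has $0$ as its \emph{only} finite singularity. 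Then Theorem~\ref{theo:beukers} applies at every $\alpha\in\Qbar^*$, and the singular-point obstacle disappears. This is the missing ingredient in your sketch.
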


The proof of Theorem \ref{th1} shares certain characteristics with that of Theorem \ref{theo:AR}, in particular Beukers' lifting results in \cite{Beukers} will form our starting point concerning $E$-functions, and we shall also need to compute the  minimal non-zero differential equation satisfied by an $E$-function. But our proof is not an adaptation, as we need new ideas. Indeed, we shall make an important use of methods coming from commutative algebra (in particular \GB bases), which {\em a contrario} were not used in \cite{AR}. In particular, 
 in the case $p=1$  Theorem \ref{th1} provides an algorithm different from the one of Theorem \ref{theo:AR}. 
 
 \bigskip
 
 The paper is organized as follows. In \S\ref{secGB}, we recall certain standard facts in elimination theory we shall need in the proof of Theorem \ref{th1} in the subsequent sections. In \S\ref{subsecpol}, we explain part $(i)$, which is not really new and the proof of which is given for the reader's convenience. In \S\ref{sec:linindepE}, 
we show that we can assume in $(ii)$ and $(iii)$ that $F_1(z), \ldots, F_p(z)$ are linearly independent over $\Qbar(z)$. In \S\ref{sec:reduc23commalg}, we then reduce parts $(ii)$ and $(iii)$ to  a problem of commutative algebra using Beukers' lifting results. We solve this problem (which may be of independent interest) in    \S \ref{sec6} by modifying Buchberger's algorithm. At last,   \S \ref{sec7} is devoted to examples. 

 \bigskip

We conclude with the following remark.  In his original paper, Siegel gave a slightly more general definition of $E$-functions: the upper bounds 
$\vert \sigma(a_n)\vert \leq C^{n+1}$
 and $1\le  d_n  \leq D^{n+1}$ in $(ii)$ and $(iii)$ were replaced with $\vert \sigma(a_n)\vert \leq n!^{\varepsilon}$ and
 $1\le  d_n  \leq   n!^{\varepsilon}$ for any $\varepsilon>0$, provided $n$ is large enough with respect to  $\varepsilon$.
 Theorems \ref{thm:sish} and \ref{thm:ns} hold in this more general setting. Beukers' proof of Theorem \ref{theo:beukers} does not, but Andr\'e has proved \cite{andreens} a general result, valid for $E$-functions in Siegel's sense, which contains Theorem \ref{theo:beukers}. In the present paper we shall use also an effective result due to Beukers  \cite[Theorem 1.5]{Beukers} to get rid of non-zero singularities, which has been recently generalized to $E$-functions in Siegel's sense by Lepetit \cite{lepetit}. Therefore all results we prove in this paper are valid in this setting, and the same remark applies to Theorem \ref{theo:AR} proved in  \cite{AR}.

\section{\GB bases and elimination: standard facts} \label{secGB}

In this section, we recall standard 
facts about \GB bases and elimination. We refer to any textbook on this topic (for instance \cite{BW, AECF, Cox}) for details and proofs. 

\medskip

Let $\L$ be a field, and $I $ be an ideal of the polynomial ring $\L[T_1,\ldots,T_N]$. Let $i \in \{1,\ldots,N\}$ be an integer, fixed throughout this section: in Proposition \ref{propelimGB} below we shall  compute a system of generators of the intersection  $I\cap\L[T_1,\ldots,T_i]$. 

\medskip

A {\em monomial} is an element of $\L[T_1,\ldots,T_N]$ of the form $\Tsoul^\asoul = T_1^{a_1}\ldots T_N^{a_N}$ with $\asoul = (a_1,\ldots,a_N)\in\N^N$. Given $\asoul, \bsoul\in\N^N$ we say that $\Tsoul^\asoul$ is less than $\Tsoul^\bsoul$, and we write $\Tsoul^\asoul < \Tsoul^\bsoul$, if either $\sum_{j=1}^i a_j < \sum_{j=1}^i b_j$ or: ( $\sum_{j=1}^i a_j = \sum_{j=1}^i b_j$ and $\asoul$ is less than $\bsoul$ in the lexicographical order on $\N^N$). This specific order, called the {\em $i$-th elimination order}, is useful to us because our purpose is to study $I\cap\L[T_1,\ldots,T_i]$ (see Proposition \ref{propelimGB} below). A monomial $\Tsoul^\asoul$ is  said to be {\em divisible} by  $\Tsoul^\bsoul$ if $c_j = a_j-b_j$ is non-negative for any $j \in \{1,\ldots,N\}$; then we write $\frac{\Tsoul^\asoul}{\Tsoul^\bsoul} = \Tsoul^\csoul$. 

Any non-zero polynomial $P\in \L[T_1,\ldots,T_N]$ can be written in a unique way as a  linear combination $\lambda_1  \Tsoul^{\asoul_1} + \ldots +  \lambda_r  \Tsoul^{\asoul_r}$ with non-zero coefficients $\lambda_1,\ldots,\lambda_r\in\L$ of  decreasing monomials $  \Tsoul^{\asoul_1}  > \ldots > \Tsoul^{\asoul_r}$. Then $\Tsoul^{\asoul_1}$ is called the {\em leading monomial} of $P$, and we write $\Tsoul^{\asoul_1} = \lmon{P}$. In the same way, $\asoul_1 = \lexp{P}$ is the {\em leading exponent}, $\lambda_1 = \lcoeff{P}$ is the {\em leading coefficient}, and $\lambda_1 \Tsoul^{\asoul_1} = \lterm{P}$ is the {\em leading term} of $P$. 

A \GB basis (or standard basis) of $I$, with respect to the order $<$ we have chosen, is a family $(P_1,\ldots,P_r)$ of non-zero elements of $I$ with the following property: for any $P\in I \setminus\{0\}$ there exists $k\in\{1,\ldots,r\}$ such that $\lmon{P}$ is divisible by $\lmon{P_k}$. An important property is that any \GB basis generates the ideal $I$. However no minimality property is assumed: adding arbitrary elements of $I \setminus\{0\}$ to a \GB basis always provides a  \GB basis. Starting with a system of generators of $I$, a usual way to construct a \GB basis is Buchberger's algorithm (i.e., lines 1 and 2 of Algorithm 1 presented below). To state it we need some more notation.

Given any family  $P,P_1,\ldots,P_r \in \L[T_1,\ldots,T_N]$ of non-zero polynomials, we consider the following operation. Choose (if possible) an index $k\in\{1,\ldots,r\}$ such that   $\lmon{P}$ is divisible by $\lmon{P_k}$, and replace $P$ with $P - \frac{\lterm{P}}{\lterm{P_k}} P_k$. After repeating this operation as many times as possible, $P$ is replaced with a polynomial $\widetilde P$ such that there is no   index $k$ for which  $\lmon{P}$ is divisible by $\lmon{P_k}$; possibly $\widetilde P=0$. This polynomial $\widetilde P$ is called a {\em remainder} in the weak division of $P$ by $P_1,\ldots,P_r$. Note that different choices of $k$ at some steps may lead to different remainders.

Given non-zero polynomials $P,Q \in \L[T_1,\ldots,T_N]$, their {\em $S$-polynomial} or {\em syzygy polynomial} is defined by
$$S(P,Q) = \frac{\lterm{Q}P - \lterm{P}Q}{\gcd(\lmon{P},\lmon{Q})}$$
where $\gcd( \Tsoul^\asoul , \Tsoul^\bsoul) =  \Tsoul^\csoul$ with $c_j = \min(a_j,b_j)$ for any $j\in \unN$. 

\medskip

We can now state the algorithm we are interested in in this section.

\bigskip

\fbox{\begin{minipage}{0.9\textwidth}
Input: a generating system $F$ of an ideal $I$ of $\L[T_1, \ldots, T_N]$.

Output: a \GB basis $H$ of $I\cap\L[T_1, \ldots, T_i]$.

1. $G \recoit F$

2. Repeat:

\quad \quad $a.$ $G' \recoit G$

\quad \quad $b.$ For $P,Q\in G'$ with $P\neq Q$ do:

\quad \quad \quad \quad $(i).$ Compute a remainder $R$ in the weak division of $S(P,Q)$ by $G'$

\quad \quad  \quad \quad $(ii).$ If $R\neq 0$:
 
\quad \quad \quad \quad    \quad \quad \quad $G \recoit G\cup\{R\}$

Until $G=G'$

3. $H \recoit G \cap \L[T_1, \ldots, T_i]$
\end{minipage}}

\medskip

\begin{center}
Algorithm 1: Computation of a \GB basis of $I\cap\L[T_1, \ldots, T_i]$.
\end{center}

\medskip

Lines 1 and 2 of Algorithm 1 are known as Buchberger's algorithm: the output $G$ is a \GB basis of $I$. Line 3 means that $H$ contains those polynomials in $G$ which depend only on the variables $T_1$, \ldots, $T_i$. Then $H$ is a \GB basis of $I\cap\L[T_1, \ldots, T_i]$  by the following result (see \cite[Exercise 24.4]{AECF}), which concludes the proof that Algorithm 1 works as announced. 

\begin{prop} \label{propelimGB} Let $P_1,\ldots,P_r$ be a \GB basis of $I$ with respect to the $i$-th elimination order. Then those $P_j$ which 
depend  only on the variables $T_1$, \ldots, $T_i$ make up a  \GB basis of $I\cap\L[T_1, \ldots, T_i]$.
\end{prop}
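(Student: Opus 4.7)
The plan is to show that the subset
\[
H = \{P_j : 1 \leq j \leq r,\ P_j \in \L[T_1,\ldots,T_i]\}
\]
of the given \GB basis is itself a \GB basis of $I \cap \L[T_1,\ldots,T_i]$. Every element of $H$ manifestly lies in $I \cap \L[T_1,\ldots,T_i]$, so the substantive content is only the divisibility condition in the definition of a \GB basis; that $H$ then generates $I \cap \L[T_1,\ldots,T_i]$ will be a formal consequence, since any \GB basis generates its ideal.

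The argument will rest on the following \emph{elimination property}, which is baked into the $i$-th elimination order by construction: if a non-zero polynomial $P\in\L[T_1,\ldots,T_N]$ has leading monomial $\lmon{P}$ lying in $\L[T_1,\ldots,T_i]$ (that is, free of $T_{i+1},\ldots,T_N$), then $P$ itself lies in $\L[T_1,\ldots,T_i]$. The reason is that the order is designed so that any monomial involving at least one of $T_{i+1},\ldots,T_N$ is strictly greater than every monomial supported on $\{T_1,\ldots,T_i\}$ only; hence a putative term of $P$ involving some $T_j$ with $j>i$ would dominate $\lmon{P}$, contradicting its maximality. Unpacking this from the definition of the order is the one step that requires a small amount of bookkeeping, and I expect it to be the main (very mild) obstacle; the rest of the argument is then formal.

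Given the elimination property, the proof finishes in a few lines. Fix any non-zero $P \in I \cap \L[T_1,\ldots,T_i]$. Since $P\in I$ and $(P_1,\ldots,P_r)$ is a \GB basis of $I$, there exists $k\in\{1,\ldots,r\}$ such that $\lmon{P_k}$ divides $\lmon{P}$. Because $P\in\L[T_1,\ldots,T_i]$, its leading monomial $\lmon{P}$ involves only $T_1,\ldots,T_i$, and hence so does the divisor $\lmon{P_k}$. The elimination property then forces $P_k\in\L[T_1,\ldots,T_i]$, so $P_k\in H$. This verifies the divisibility condition required for $H$ to be a \GB basis of $I\cap\L[T_1,\ldots,T_i]$, completing the proof.
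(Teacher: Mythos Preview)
Your argument is the standard textbook proof of the elimination theorem (the paper itself gives no proof, only a reference to \cite[Exercise 24.4]{AECF}), and its structure is sound. The gap is exactly at the step you yourself flag as ``a small amount of bookkeeping'': you assert that under the $i$-th elimination order, every monomial involving some $T_j$ with $j>i$ is strictly greater than every monomial supported on $T_1,\ldots,T_i$ alone. With the order as the paper actually defines it --- first compare $\sum_{j=1}^{i} a_j$, then break ties lexicographically --- this is false: already $T_1>T_N$, since the relevant sums are $1$ and $0$. The paper's order makes $T_1,\ldots,T_i$ the \emph{large} block, so the implication $\lmon{P}\in\L[T_1,\ldots,T_i]\Rightarrow P\in\L[T_1,\ldots,T_i]$ simply does not follow.

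In fact, with the order as literally written the proposition itself fails: take $N=2$, $i=1$, $I=(T_1^2-T_2,\ T_2^2)$. The first elimination order is then ordinary lex with $T_1>T_2$; the two generators already form a \GB basis, neither lies in $\L[T_1]$, yet $T_1^4=(T_1^2+T_2)(T_1^2-T_2)+T_2^2\in I\cap\L[T_1]$. This points to a slip in the paper's definition (the weight should be $\sum_{j>i}a_j$ rather than $\sum_{j\le i}a_j$, or equivalently one should intersect with $\L[T_{i+1},\ldots,T_N]$). With that correction your elimination property holds and your proof goes through verbatim; but as it stands you have asserted without verification a property of the order that does not survive checking against the definition actually given, and that is precisely the place where the argument needed care.
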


\medskip

We conclude this section with basic facts about polynomials in $T_1$,  \ldots, $T_N$ that depend on an auxiliary parameter $z$; this will be the setting of the proof of part $(i)$ of Proposition~\ref{proputile} in \S \ref{sec6} below.

Let $\K$ be a  subfield of $\C$, and $\L = \K(z)$. We fix $W(z)\in\K[z]\setminus\{0\}$ and consider polynomials $P\in\K[z,\frac1{W(z)},T_1,\ldots,T_N]  \subset \L[T_1,\ldots,T_N] $. We also fix $\alp\in\K$ such that $W(\alp)\neq0$. Then any such $P$ can be evaluated at $z=\alp$; we denote by $P_\alp \in\K[T_1,\ldots,T_N]$ the polynomial obtained in this way.

An easy (but already instructive) example is the following: $P = (z-1)T_1+T_2 $. Assume that $i\geq 2$ and consider as above the $i$-th elimination order, so that $T_1 > T_2$. Then in $\L[T_1,\ldots,T_N]$ we have $\lmon{P} = T_1$. However the leading monomial of $P_\alp$ depends on $\alp$: it is $T_1$ if $\alp\neq 1$, but $T_2$ if $\alp=1$. In general, $\lmon{P_\alp}$ can be easily determined for almost all values of $\alp$:
$$ 
\lmon{P_\alp} =   \lmon{P} \quad \mbox{ if }\;(\lcoeff{P})(\alp)\neq 0.$$
In particular, $ (\lcoeff{P})(\alp)\neq 0$ implies $P_\alp\neq 0$. 

In the same way  we have 
$$S(P,Q)_\alp = S(P_\alp,Q_\alp)  \quad \mbox{ if } (\lcoeff{P})(\alp)\neq 0 \; \mbox{ and }\; (\lcoeff{Q})(\alp)\neq 0.$$

\section{Algebraic relations between $F_1(z)$, \ldots, $F_p(z)$} \label{subsecpol}

In this section, we briefly explain  the proof of part $(i)$ in Theorem \ref{th1}. It is a modification of one of the steps in Feng's algorithm \cite{Feng} to compute differential Galois groups, which is itself based on Hrushovski's algorithm \cite{hrus}.

For any $1\leq i \leq p$, we are given a differential equation of order $n_i$ satisfied by $F_i$. Then the vector $Y$ with $n = n_1+\ldots+n_p$ coordinates $F_i^{(j)}$, where $1\leq i \leq p$ and $0\leq j \leq n_i-1$, is a solution of a differential system $Y'=AY$ with $A\in M_n(\Qbar(z))$. Let $Y_1=Y$, $Y_2$, \ldots, $Y_n$ be a basis of solutions of this differential system; in other words, the matrix with columns $Y_1$, \ldots, $Y_n$ is a fundamental matrix of solutions. As pointed out to us by Feng, Algorithm~4.1 of \cite{Feng} in which Step $(h)$ is replaced with \cite[Proposition 3.6]{Feng} provides an algorithm to compute a system of generators of the ideal of algebraic relations over $\Qbar(z)$ among the coordinates of $Y_1$, \ldots, $Y_n$  (i.e., among the coefficients of this fundamental matrix of solutions). Using \GB bases (see Algorithm 1 in \S \ref{secGB}), we then deduce a system of generators of the ideal of algebraic relations over $\Qbar(z)$ among $F_1$, \ldots, $F_p$, since they are amongst the coordinates of~$Y_1$.

This concludes the proof of part $(i)$ of Theorem \ref{th1}.

\section{Linear dependence relations between $E$-functions} \label{sec:linindepE}

In this section, we prove that  in parts $(ii)$ and $(iii)$ of Theorem  \ref{th1}, we may assume that $F_1(z)$, \ldots, $F_p(z)$ are $\Qbar(z)$-linearly independent.
Let us start with a lemma, of independent interest, in the statement of which it is not necessary to assume that the functions $y_j$ are $E$-functions.

\begin{lem} \label{lemlindep}
Let $Y = \tra (y_1,\ldots,y_n)$ be a solution of a differential system $Y'=AY$ with $A\in M_n(\Qbar(z))$. Let 
$$\calR_Y = \{ (P_1,\ldots,P_n)\in \Qbar(z)^n, \, \, P_1(z) y_1(z) + \ldots + P_n (z) y_n(z) = 0\}$$
be the $\Qbar(z)$-vector space of $\Qbar(z)$-linear relations between $y_1(z)$, \ldots, $y_n(z)$. Then there is an algorithm to compute a basis of this vector space; accordingly it enables one to know whether $y_1$, \ldots, $y_n$ are linearly independent over $\Qbar(z)$ or not.
\end{lem}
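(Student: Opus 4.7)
My plan is to reduce the computation of $\calR_Y$ to finite-dimensional linear algebra over $\Qbar$, combining a differential-module structure on $\calR_Y$ with the power-series computability of $y_1,\ldots,y_n$: the input consists of $A$ together with initial data specifying $Y$ at some ordinary point, which allows us to compute every Taylor coefficient of each $y_i$ from the recurrence induced by $Y'=AY$.

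The key structural observation is that $\calR_Y$ is stable under the $\Qbar(z)$-linear operator $\nabla\colon (P_1,\ldots,P_n)\mapsto \bigl(P_j'+\sum_iP_iA_{ij}\bigr)_j$ on $\Qbar(z)^n$: differentiating $\sum_iP_iy_i=0$ and using $y_i'=\sum_jA_{ij}y_j$ gives $\sum_j(P_j'+\sum_iP_iA_{ij})y_j=0$. Hence $(\calR_Y,\nabla)$ is a sub-differential-module of the $n$-dimensional module $(\Qbar(z)^n,\nabla)$. After clearing denominators, every element of $\calR_Y$ is represented by some tuple in $\Qbar[z]^n$. For any fixed degree bound $D$, I would compute the $\Qbar$-vector space of polynomial relations $(P_1,\ldots,P_n)$ with $\deg P_i\le D$ by writing $P_i=\sum_{k=0}^Dc_{i,k}z^k$ and translating $\sum_iP_iy_i=0$ into linear equations on the unknowns $c_{i,k}$ via the Taylor expansion of $\sum_iP_iy_i$. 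Only finitely many such equations need to be imposed: since $\mathrm{span}_{\Qbar(z)}(y_1,\ldots,y_n)$ is closed under $d/dz$ and has $\Qbar(z)$-dimension at most $n$, any function of the form $\sum_iP_iy_i$ satisfies an explicit linear ODE of order $\le n$ whose polynomial coefficients are effectively computed from $A$ and the $P_i$, so at an ordinary point of that ODE its identical vanishing is certified by the vanishing of a computable finite number $N_D$ of Taylor coefficients.

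The main obstacle is to bound $D$ a priori so as to capture all of $\calR_Y$. I would address it by exploiting the sub-differential-module structure above: $\calR_Y$, as a sub-module of $(\Qbar(z)^n,\nabla)$, admits a polynomial generating family whose degrees can be bounded effectively in terms of $A$, by invoking the well-developed theory of rational solutions of linear differential systems (Barkatou, Pflügel, van Hoeij and others). With such a bound $D_0$ in hand, running the preceding step at $D=D_0$ produces a generating family of $\calR_Y$ from which a basis is extracted by $\Qbar(z)$-linear algebra. An adaptive variant avoids committing to $D_0$ in advance: increment $D$ by $1$ and stop as soon as the $\Qbar(z)$-span of the relations already found is itself $\nabla$-stable, since by the sub-module structure any such stable subspace containing all sufficiently low-degree relations must equal $\calR_Y$. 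Linear independence of $y_1,\ldots,y_n$ over $\Qbar(z)$ is then decided by testing whether the computed basis is empty.
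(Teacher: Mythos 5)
Your proof takes a genuinely different route from the paper. The paper applies the cyclic vector theorem to reduce to a companion system, computes the \emph{minimal} scalar operator $L_0$ annihilating the cyclic vector (this is where the effective inputs enter: degree bounds from [BRS], the multiplicity estimate of [BB] completed by [BCY], and the algorithm of [AR]), and then expresses each $y_i$ in a known $\Qbar(z)$-basis of the span, so that computing $\calR_Y$ becomes Gaussian elimination. Your reformulation via the dual connection $\nabla$ on $\Qbar(z)^n$, with $\calR_Y$ a $\nabla$-stable subspace and relations of bounded degree detected by truncated Taylor expansions, is a legitimate alternative framework, but as written it has two genuine gaps.

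First, the a priori degree bound is not supplied by the theory you cite. Algorithms for rational or polynomial solutions of linear differential systems compute the kernel of $\nabla$ (solutions of the adjoint system $P' = -\tra A \, P$), whereas $\calR_Y$ is merely a $\nabla$-\emph{stable} subspace of $\Qbar(z)^n$ determined by pairing with the transcendental solution $Y$; it is not the solution set of any system given in advance. Bounding the degrees of polynomial generators of an \emph{arbitrary} $\nabla$-submodule is the factorization problem for differential operators, and the effective input needed there is essentially the same as what the paper uses ([BRS], [BB], [BCY]); invoking it is not a shortcut around that machinery. Relatedly, the claim that a computable $N_D$ certifies identical vanishing needs a multiplicity-estimate argument, since the order-$\le n$ ODE you attach to $g=\sum_i P_i y_i$ has coefficients depending on the unknowns $c_{i,k}$, so one cannot simply locate an ordinary point of it in advance; again this is exactly the role of [BB]/[BCY] in the paper.

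Second, the adaptive stopping criterion is incorrect as stated. The zero subspace is always $\nabla$-stable, so ``stop as soon as the span of the relations found so far is $\nabla$-stable'' would terminate at $D=0$ and output $\calR_Y=\{0\}$ even when $\calR_Y\neq\{0\}$ is generated only in higher degree. More generally, a proper $\nabla$-stable subspace of $\calR_Y$ gives no contradiction, so $\nabla$-stability of the span found so far does not certify completeness; you are implicitly relying on the very degree bound the criterion was meant to avoid. To repair the argument you would either need to establish the effective degree bound directly (via the factor/submodule degree bounds mentioned above) or find a termination certificate that does not presuppose it.
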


\begin{proof}The cyclic vector theorem provides an invertible matrix $P\in {\rm GL}_n(\Qbar(z))$ (which depends only on $A$) such that $V = PY$ satisfies $V'=CV$, where $C\in M_n(\Qbar(z))$ is a companion matrix. In other words, letting $V = \tra (v_1(z),\ldots, v_n(z))$ we have $v_i = v_1^{(i-1)}$ for any $1\leq i \leq n$, and $Lv_1 = 0$ for some differential operator $L\in\Qbar(z) [\frac{d}{dz}]$ of order $n$. Moreover $P$ and $L$ can be computed effectively (see for instance \cite[Chapter 2, \S 2.1]{VDPS}). 

Let $L_0\neq 0$ denote a differential operator in $\Qbar(z) [\frac{d}{dz}]$ such that $L_0 v_1=0$, of minimal order $k$. Then \cite[Theorems 1-2]{BRS} provides an explicit integer $D$ for which such an $L_0$ exists of the form $\sum_{i=0}^k Q_i(z) (\frac{d}{dz})^i$ with $Q_i\in \Qbar[z]$ of degree at most $D$. Using the multiplicity estimate  of \cite[Th\'eor\`eme 1]{BB} completed by \cite[Lemma 3.1]{BCY}, an algorithm to compute explicitly such an $L_0$ is given in \cite[\S 3]{AR}, and we refer to the discussion surrounding \cite[Theorem~3]{BRS} for further details about it.

Then $v_1$, $v_1'$, \ldots, $v_1^{(k-1)}$ are linearly independent over $\Qbar(z)$, and we have 
$$v_1^{(k)}(z) = - \sum_{i=0}^{k-1} \frac{Q_i(z)}{Q_k(z)} v_1^{(i)}(z).$$
Taking successive derivatives of this relation we can express each $v_j = v_1^{(j-1)}$, with $k+1\leq j \leq n$, as a $\Qbar(z)$-linear combination of $v_1$, $v_1'$, \ldots, $v_1^{(k-1)}$. Since $Y = P^{-1}V$ we deduce an explicit expression of $y_1$, \ldots, $y_n$ as $\Qbar(z)$-linear combinations of the $\Qbar(z)$-linearly independent  functions  $v_1$, $v_1'$, \ldots, $v_1^{(k-1)}$.  Therefore Lemma  \ref{lemlindep} boils down to the problem of finding linear relations between the columns of a matrix: it can be easily solved using Gaussian elimination.\end{proof}

\medskip

We now apply Lemma \ref{lemlindep} to prove that in parts $(ii)$ and $(iii)$ of Theorem  \ref{th1} we may assume that $F_1(z)$, \ldots, $F_p(z)$ are $\Qbar(z)$-linearly independent.

Let $F_1(z)$, \ldots, $F_p(z)$ be $E$-functions. Using Lemma \ref{lemlindep} we may compute a maximal subset $F_{i_1}(z)$, \ldots, $F_{i_t}(z)$ of $\Qbar(z)$-linearly independent functions among $F_1(z)$, \ldots, $F_p(z)$, and an expression
\begin{equation} \label{eqdeplinramener}
F_i(z) = \sum_{j=1}^t Q_{i,j} (z) F_{i_j}(z) \mbox{ for each } i \in \{1,\ldots,p\} \setminus \{i_1,\ldots,i_t\}
\end{equation}
with $Q_{i,j}(z) \in \Qbar(z)$.

\medskip

If $t<p$ then part $(iii)$ of Theorem \ref{th1} is empty; to prove part $(ii)$ in this case, it is enough to compute a system of generators of the ideal of polynomial relations between $F_{i_1}$, \ldots, $F_{i_t}$ over $\Qbar(z)$. Indeed adding the relations \eqref{eqdeplinramener} to this system provides a system of generators of the ideal of polynomial relations between $F_1(z)$, \ldots, $F_p(z)$.

\medskip

Therefore, to complete the proof of Theorem \ref{th1}  what remains to do (taking \S \ref{subsecpol} into account) is to prove parts $(ii)$ and $(iii)$ under the additional assumption that $F_1(z)$, \ldots, $F_p(z)$ are linearly independent over $\Qbar(z)$.

\section{Reduction of parts $(ii)$ and $(iii)$ of Theorem \ref{th1} to statements in commutative algebra} \label{sec:reduc23commalg}

The following proposition shows how to reduce parts $(ii)$ and $(iii)$ of Theorem \ref{th1} (in the case where $F_1(z)$, \ldots, $F_p(z)$ are  linearly independent over $\Qbar(z)$) to a problem in commutative algebra, that we shall solve in \S \ref{sec6}.  To begin with, we point out that $F_1(0)$, \ldots,  $F_p(0)$ are algebraic numbers so that $\alpha=0$ always belongs to the set of part $(iii)$, and part $(ii)$ is trivial for  $\alpha=0$. Therefore throughout this section, $\alpha$ denotes a non-zero algebraic number.

We denote by $\Xsoul$ the set of variables $X_1$, \ldots, $X_N$. In the following result, by ``compute an ideal $I$'' we mean ``compute a system of generators of $I$''.

\begin{prop} \label{propclef} Let $F_1(z)$, \ldots, $F_p(z)$ be $E$-functions  linearly independent over $\Qbar(z)$. 
There exists an algorithm to compute an integer $N\geq 1$, an ideal $I$ of $\Qbar[z, X_1,\ldots,X_N]$ and $\Qbar[z]$-linearly independent polynomials  $\varphi_1,\ldots,\varphi_p\in\Qbar[z][\Xsoul]$   homogeneous of degree 1 with respect to $X_1$,\ldots, $X_N$ with the following properties:
\begin{itemize}
\item[$(a)$] For any $R\in \Qbar[z,  Y_1,\ldots,Y_p] $ we have:
$$R(z, F_1(z),\ldots,F_p(z))=0 \quad \mbox{ if, and only if, } \quad 
R(z,  \varphi_1(z, \Xsoul), \ldots,  \varphi_p(z, \Xsoul) ) \in I.$$
\item[$(b)$] For any $S\in \Qbar[Y_1,\ldots,Y_p] $ and any $\alp\in\Qbar\etoile$ we have $S(F_1(\alp),\ldots,F_p(\alp))=0$ if, and only if, 
there exists $Q\in I$ such that 
$$ S( \varphi_1(\alp, \Xsoul), \ldots,  \varphi_p(\alp, \Xsoul) ) = Q(\alp, X_1,\ldots,X_N).$$
\end{itemize}
\end{prop}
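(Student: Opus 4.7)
The plan is to embed the $F_i$'s as coordinates of a larger $E$-function vector $Y=\tra(G_1,\ldots,G_N)$ satisfying a differential system $Y'=AY$ whose only finite singularity is at $z=0$, and to take $I$ as the ideal of all $\Qbar[z]$-polynomial relations between $G_1(z),\ldots,G_N(z)$; the $\varphi_i$'s will simply pick out the coordinate of $Y$ corresponding to $F_i$.

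To build $Y$, I would first produce, for each $i$, a linear ODE annihilating $F_i$ whose leading polynomial coefficient is a power of $z$; the existence of such an ODE follows from Beukers' Theorem~1.5 (cited in the introduction), and effectivity can be obtained by combining that result with the algorithms of \cite{AR} for computing minimal annihilators. Concatenating the companion vectors $\tra(F_i,F_i',\ldots,F_i^{(m_i-1)})$ and appending the constant function as a final coordinate $G_N=1$, I obtain a vector $Y$ of some dimension $N$ satisfying $Y'=AY$ with $z^kA(z)\in M_N(\Qbar[z])$ for some $k\geq 0$. Since the $F_i$'s are $\Qbar(z)$-linearly independent, they occupy distinct positions $j(i)<N$ in $Y$, and setting $\varphi_i(z,\Xsoul):=X_{j(i)}$ gives the required $p$ $\Qbar[z]$-linearly independent linear forms. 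I then apply part~$(i)$ of Theorem~\ref{th1} to compute generators of the ideal of relations between the $G_\ell$'s in $\Qbar(z)[\Xsoul]$, and pass to the contraction $I\subset\Qbar[z,\Xsoul]$ using the \GB basis elimination techniques recalled in \S\ref{secGB}.

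Part~$(a)$ is then immediate: $R(z,\varphi_1(z,\Xsoul),\ldots,\varphi_p(z,\Xsoul))=R(z,X_{j(1)},\ldots,X_{j(p)})$ specializes at $X_\ell=G_\ell(z)$ to $R(z,F_1(z),\ldots,F_p(z))$, so the two expressions vanish simultaneously. For part~$(b)$, the ``if'' direction comes from substituting $z=\alpha$ and $X_\ell=G_\ell(\alpha)$ in $Q\in I$. The converse is where the main content lies: set $P(\Xsoul):=S(X_{j(1)},\ldots,X_{j(p)})\in\Qbar[X_1,\ldots,X_{N-1}]$, which by hypothesis vanishes at $(G_1(\alpha),\ldots,G_N(\alpha))$. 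Since Beukers' Theorem~\ref{theo:beukers} is stated for homogeneous relations, I would homogenize $P$ with respect to $X_N$ into a homogeneous $\tilde P$; because the only finite singularity of $A$ is $0$ and $\alpha\neq 0$, that theorem produces $\tilde Q\in I$, homogeneous in the $X_\ell$, with $\tilde Q(\alpha,\Xsoul)=\tilde P(\Xsoul)$. Setting $Q(z,X_1,\ldots,X_N):=\tilde Q(z,X_1,\ldots,X_{N-1},1)$ then yields $Q\in I$ (using $G_N\equiv 1$) with $Q(\alpha,\Xsoul)=\tilde P(X_1,\ldots,X_{N-1},1)=P(\Xsoul)$, as required.

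The main obstacle is the appeal to Beukers' Theorem~1.5 to eliminate all non-zero finite singularities from the constructed system: without it, Theorem~\ref{theo:beukers} would only lift relations at regular points, so the exceptional set involved in part~$(iii)$ of Theorem~\ref{th1} could not be controlled a priori. A secondary, more routine issue is the passage from the $\Qbar(z)[\Xsoul]$-ideal to its contraction in $\Qbar[z,\Xsoul]$, which falls squarely within the scope of the commutative-algebra machinery developed in \S\ref{sec6}.
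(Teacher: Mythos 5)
Your proposal follows the same overall strategy as the paper -- embed the $F_i$ as $\Qbar[z]$-linear combinations of the coordinates of an $E$-function vector $(G_1,\ldots,G_N)$ solving a system whose only finite singularity is $0$, take $I$ to be the ideal of polynomial relations among the $G_\ell$, and prove $(a)$ by substitution and $(b)$ via Beukers' lifting theorem. But there is a genuine gap in your construction step. You claim that for each $F_i$ there is a scalar ODE with leading coefficient a power of $z$, and that this follows from Beukers' Theorem 1.5. It does not: that theorem desingularizes \emph{systems}, producing $E$-functions $g_1,\ldots,g_m$ with $F_i=\sum_k m_{i,k}(z)g_k(z)$ (with $m_{i,k}\in\Qbar[z]$) and a first-order system for the $g_k$ with coefficients in $\Qbar[z,1/z]$; passing from this to a scalar ODE for $F_i$ by the cyclic vector construction generally reintroduces non-zero apparent singularities. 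For instance $F(z)=(z^2-1)e^z$ has minimal ODE $(z^2-1)F'-(z^2+2z-1)F=0$, whose leading coefficient vanishes at $z=\pm1$. A higher-order scalar ODE with only $0$ and $\infty$ as singularities does exist (this is Andr\'e's $E$-operator theorem), but neither Beukers' Theorem 1.5 nor the minimal-annihilator algorithm of \cite{AR} computes it, so your appeal to these references does not establish effectivity.

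The paper avoids this issue entirely. It forms the joint $\Qbar(z)$-vector space $\calF$ spanned by all $F_i^{(j)}$, chooses a basis $(F_1,\ldots,F_p,F_{p+1},\ldots,F_N)$ with the given functions up front, applies Beukers' Theorem 1.5 once to the resulting system $Y'=AY$, and then sets $\varphi_j(z,\Xsoul)=\sum_k m_{j,k}(z)X_k$ using the polynomial matrix $M$ that the theorem itself produces. In other words, the $\varphi_j$ are deliberately allowed to be non-trivial $\Qbar[z]$-linear forms in the $X_k$, rather than coordinate projections; this is precisely why Proposition~\ref{propclef} is stated for general degree-1 homogeneous $\varphi_j$. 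Your argument works if you replace the coordinate projections with these linear forms. Two further minor points: (1) the paper invokes the inhomogeneous version of Beukers' lifting result directly, so the append-$1$-and-homogenize detour in your proof of $(b)$ is valid but unnecessary; (2) you should also note, as the paper does, that Feng's algorithm together with \GB elimination yields generators of $I$ as an ideal of $\Qbar[z,\Xsoul]$ (not merely of $\Qbar(z)[\Xsoul]$), which is the form needed in the statement.
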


In particular, $F_1$, \ldots, $F_p$ are algebraically  independent over $\Qbar(z)$  if, and only if,
$$I \cap \Qbar [ z,  \varphi_1(z, \Xsoul), \ldots,  \varphi_p(z, \Xsoul) ] = \{0\}.$$

\medskip

In this section we prove Proposition \ref{propclef} by applying two results of Beukers \cite{Beukers} on $E$-functions.

\medskip

\begin{proof}
Let $F_1$, \ldots, $F_p$ be $E$-functions  linearly independent over $\Qbar(z)$. Recall that each $F_i$ is given with a differential equation of order $n_i$ it satisfies. Let $\calF$ denote the $\Qbar(z)$-vector space generated by the $E$-functions $F_i^{(j)}$, $1\leq i \leq p$, $0\leq j \leq n_i-1$. Lemma \ref{lemlindep} enables us to compute the dimension of $\calF$, denoted by $N$. Since  $F_1$, \ldots, $F_p$ are  linearly independent over $\Qbar(z)$ we have $N\geq p$. Moreover Lemma \ref{lemlindep} shows also how to pick up $E$-functions $F_{p+1}$, \ldots, $F_N$ among the $F_i^{(j)}$ (with $j\geq 1$) such that $(F_1,\ldots,F_p, F_{p+1}, \ldots, F_N)$ is a basis of $\calF$ over $\Qbar(z)$.

Since $\calF$ is stable under derivation, each $F_i'$ (with $1\leq i \leq N$) is a linear combination of $F_1$, \ldots, $F_N$ with coefficients in $\Qbar(z)$: the vector $Y = \tra (F_1,\ldots,F_N)$ is a solution of a differential system $Y'=AY$ with $A\in M_N(\Qbar(z))$. Moreover the derivatives of $F_1$, \ldots, $F_N$ are explicit linear combinations of the $F_i^{(j)}$, and therefore of $F_1$, \ldots, $F_N$ using Lemma \ref{lemlindep}: the matrix $A$ is effectively computable.

\medskip

Our strategy is to apply Beukers' Theorem \ref{theo:beukers}. However $\alp$ might be a singularity of the differential system $Y'=AY$ (i.e., $T(\alpha)=0$ in the notation of Theorem \ref{theo:beukers}): to do this  we have to get rid of all non-zero singularities first. With this aim in view,  we apply \cite[Theorem 1.5]{Beukers} to the differential system $Y'=AY$ satisfied by the
vector $Y = \tra (F_1,\ldots,F_N)$ of which the coordinates are  $\Qbar(z)$-linearly independent $E$-functions. It provides $E$-functions $g_1$, \ldots,  $g_N$ and a matrix $M = (m_{j,k}(z))_{j,k}\in M_N(\Qbar[z])$ such that:
\begin{enumerate}
\item[$(i)$] For any $j\in\{1,\ldots,N\} $ we have $F_j(z) = \sum_{k=1}^N m_{j,k}(z) g_k(z)$.
 \item[$(ii)$] The vector $Z =  \tra(g_1,\ldots,g_N)$ is a solution of a differential system $Z'=BZ$ with $B\in M_N(\Qbar[z, 1/z])$.
 \end{enumerate}
The point here is that $0$ is the only possible finite singularity of the  differential system $Z'=BZ$. Moreover, the $E$-functions $g_1$, \ldots,  $g_N$, the polynomials $m_{j,k}(z)$ and the matrix $B$ are effectively computable (see \cite[\S 5]{AR}).

\medskip

Recall from $(i)$  that  $F_j(z) = \sum_{k=1}^N m_{j,k}(z) g_k(z)$ for any $j\in\{1,\ldots,N\} $; this relation is specially interesting for $j\leq p$, since $F_1$, \ldots, $F_p$ are the $E$-functions involved in the statement of Proposition \ref{propclef}. We let 
$$\varphi_j(z,X_1,\ldots,X_N) = \sum_{k=1}^N m_{j,k}(z) X_k \in \Qbar[z,X_1,\ldots,X_N]  \mbox{ for } 1\leq j \leq p,$$
so that 
\begin{equation} \label{eqfj}
F_j(z) = \varphi_j(z, g_1(z), \ldots, g_N(z))   \mbox{ for } 1\leq j \leq p.
\end{equation}
Then $\varphi_1$, \ldots, $\varphi_p$ are linearly independent over $\Qbar[z]$ because   $F_1$, \ldots, $F_p$ are.

\medskip

As mentioned in \S \ref{subsecpol} above, Feng's algorithm provides a system of generators of the ideal $I$ of polynomial relations between $g_1$, \ldots,  $g_N$:
$$I = \{ Q \in \Qbar[z,X_1,\ldots,X_N] \quad \mbox{ such that }\quad  Q(z,g_1(z),\ldots,g_N(z))=0\}.$$
Now for any $R\in \Qbar[z,  Y_1,\ldots,Y_p]$, Eq. \eqref{eqfj} yields:
$$R(z, F_1(z),\ldots,F_p(z))=  R(z,  \varphi_1(z, \gsoul(z)), \ldots,  \varphi_p(z, \gsoul(z)));$$
here and below we write $\gsoul(z)$ for the tuple $g_1(z),\ldots,g_N(z)$. Therefore $R(z, F_1(z),\ldots,F_p(z))$ is identically zero if and only if $R(z,  \varphi_1(z, \Xsoul), \ldots,  \varphi_p(z, \Xsoul) ) \in I$, thereby proving part $(a)$ of Proposition \ref{propclef}.

\medskip

To prove part $(b)$, let $\alp\in\Qbar\etoile$ and  $S\in \Qbar[Y_1,\ldots,Y_p] $. To begin with, assume that 
$$ S( \varphi_1(\alp, \Xsoul), \ldots,  \varphi_p(\alp, \Xsoul) ) = Q(\alp, X_1,\ldots,X_N) $$
for some $Q\in I$. Then we have:
\begin{eqnarray*}
S(F_1(\alp),\ldots,F_p(\alp))
&=& S(  \varphi_1(\alp, \gsoul(\alp)), \ldots,  \varphi_p(\alp, \gsoul(\alp))) \quad \mbox{ using Eq. \eqref{eqfj}}\\
&=& Q( \alp, g_1(\alp),\ldots,g_N(\alp)) \\
&=& 0 \quad \mbox{ since } Q\in I.
\end{eqnarray*}
Conversely, assume that $S(F_1(\alp),\ldots,F_p(\alp))=0$. Using Eq. \eqref{eqfj} we have 
$$S(  \varphi_1(\alp, \gsoul(\alp)), \ldots,  \varphi_p(\alp, \gsoul(\alp)))=0.$$
Consider the polynomial $P\in\Qbar[X_1,\ldots,X_N]$ defined by 
$$P(\Xsoul) = S(\varphi_1(\alp,\Xsoul), \ldots, \varphi_p(\alp,\Xsoul)) $$
so that we have
$$P(  g_1(\alp), \ldots, g_N(\alp))=0.$$
 Now   $\alp\neq 0$ is not a singularity of the differential system $Z' = BZ$ satisfied by $Z  =  \tra (g_1,\ldots,g_N)$. Therefore Beukers'  version  of the Siegel-Shidlovskii theorem (namely \cite[Theorem 1.3]{Beukers} or Theorem \ref{theo:beukers} above)  provides $Q \in \Qbar[z, X_1,\ldots,X_N]$ such that 
$$Q(z,g_1(z),\ldots,g_N(z))=0 \mbox{ and } Q(\alp, \Xsoul) =P( \Xsoul) = S(\varphi_1(\alp,\Xsoul), \ldots, \varphi_p(\alp,\Xsoul)) .$$
By definition of $I$ we have $Q\in I$. 
This concludes the proof of  Proposition \ref{propclef}.\end{proof}

\section{Completion of the proof of Theorem \ref{th1}: an algorithm in  commutative algebra} \label{sec6}

In this section, we complete the proof of Theorem \ref{th1}.

\medskip

Let $\K$ be a subfield of $\C$ on which arithmetic operations are implemented; it need not necessarily be $\Qbar$ or a number field at this stage.  We denote by $\Xsoul$ the set of variables $X_1$, \ldots, $X_N$. 

Let $\varphi_1,\ldots,\varphi_p\in\K[z,\Xsoul]$ be homogeneous of degree 1 with respect to $X_1$,\ldots, $X_N$ (i.e., linear forms in $X_1$, \ldots, $X_N$ with coefficients in $\K[z]$); assume that $\varphi_1,\ldots,\varphi_p$ are linearly independent over $\K[z]$. 

Let $I$ be an ideal of $\K[z,\Xsoul]$,  generated by $Q_1$,\ldots, $Q_\ell$. For any $\alp \in\K$, denote by
$J_\alp $ the set of all polynomials $S \in \K[Y_1,\ldots,Y_p] $ for which there exists $Q\in I$ with 
$$ S( \varphi_1(\alp, \Xsoul), \ldots,  \varphi_p(\alp, \Xsoul) ) =Q(\alp, \Xsoul).$$
 If $\K = \Qbar$ and $N$, $I$, $\varphi_1$, \ldots, $\varphi_p$ are provided by Proposition \ref{propclef}, then for any $\alpha\in\Qbar$
$$J_\alp = \{ S \in \Qbar[Y_1,\ldots,Y_p] , \, S(F_1(\alp),\ldots,F_p(\alp))=0\} $$
is the ideal considered in Theorem~\ref{th1}. Therefore combining Proposition~\ref{propclef} and Proposition~\ref{proputile} below concludes the proof of  Theorem~\ref{th1} (recall that the case $\alpha=0$ is trivial since $F_1(0)$, \ldots,  $F_p(0)$ are algebraic numbers). 

\medskip

In the following statement, both algorithms take $\varphi_1,\ldots,\varphi_p$,  $Q_1$,\ldots, $Q_\ell$  as inputs, and also  $\alp$ for $(ii)$.

\begin{prop} \label{proputile}
In this setting:
\begin{itemize}
\item[$(i)$]  If $ I  \cap \K[z, \varphi_1(z,\Xsoul),\ldots, \varphi_p(z,\Xsoul)] = \{0\}$, then there exists an algorithm to compute a non-zero polynomial $W\in\K[z]$ with the following property: for any $\alp\in\K$ such that $ J_\alp  \neq    \{0\}$, we have  $W(\alp) =  0$.
\item[$(ii)$] There exists an algorithm that, given $\alp\in\K$,  computes a  system of generators of the ideal $J_\alp $.  In particular it enables one to know whether $J_\alp$ is equal to $\{0\}$ or not.
\end{itemize}
\end{prop}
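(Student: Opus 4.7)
The plan is to reduce both parts to \GB basis computations in the enlarged polynomial ring $\K[z,\Xsoul,Y_1,\ldots,Y_p]$ after introducing auxiliary variables $Y_1,\ldots,Y_p$ that name the linear forms $\varphi_j$. Concretely, set
\[
\widetilde{I} = \bigl(Q_1,\ldots,Q_\ell,\, Y_1-\varphi_1(z,\Xsoul),\,\ldots,\,Y_p-\varphi_p(z,\Xsoul)\bigr) \subset \K[z,\Xsoul,Y_1,\ldots,Y_p],
\]
and, for $\alp\in\K$, let $\widetilde{I}_\alp \subset \K[\Xsoul,Y_1,\ldots,Y_p]$ denote its specialization at $z=\alp$. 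The key translation is the identity
\[
J_\alp = \widetilde{I}_\alp \cap \K[Y_1,\ldots,Y_p].
\]
The inclusion $\supseteq$ follows by substituting $Y_j\mapsto \varphi_j(\alp,\Xsoul)$ in any representation of an element of the right-hand side as a combination of the generators: this kills the linear generators and projects onto the specialized ideal $I_\alp\subset\K[\Xsoul]$. For the opposite inclusion, if $S(\varphi_1(\alp,\Xsoul),\ldots,\varphi_p(\alp,\Xsoul))=Q(\alp,\Xsoul)$ with $Q\in I$, write $S(Y)=Q(\alp,\Xsoul)+(S(Y)-S(\varphi(\alp,\Xsoul)))$ and note that by telescoping the parenthesized term lies in the ideal generated by the $Y_j-\varphi_j(\alp,\Xsoul)$.

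Granting this reduction, part $(ii)$ is immediate: apply Algorithm~1 to $\widetilde{I}_\alp$ with an elimination order placing $X_1,\ldots,X_N$ above $Y_1,\ldots,Y_p$, and use Proposition~\ref{propelimGB} to extract a \GB basis of $J_\alp$ from the output.

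For part $(i)$ I run Buchberger's algorithm on $\widetilde{I}$ over $\L=\K(z)$, treating $z$ as a transcendental parameter, with the same elimination order. Let $G$ denote the resulting \GB basis of $\widetilde{I}\cdot\L[\Xsoul,Y]$; after clearing denominators one may assume $G\subset\K[z,\Xsoul,Y]$. The hypothesis $I\cap\K[z,\varphi_1,\ldots,\varphi_p]=\{0\}$ forces $\widetilde{I}\cdot\L[\Xsoul,Y]\cap\L[Y_1,\ldots,Y_p]=\{0\}$: for any $R$ in this intersection, clearing denominators and substituting $Y_j\mapsto\varphi_j(z,\Xsoul)$ yields an element of $I\cap\K[z,\varphi_1,\ldots,\varphi_p]=\{0\}$, and $\varphi_1,\ldots,\varphi_p$, being $\K(z)$-linearly independent linear forms, can be completed to a basis of all linear forms in $X_1,\ldots,X_N$, hence are algebraically independent over $\K(z)$; the substitution is therefore injective and $R=0$. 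Proposition~\ref{propelimGB} then ensures that every $P\in G$ has leading monomial involving at least one $X_j$.

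Finally, define $W(z)\in\K[z]\setminus\{0\}$ as the product of the $\K[z]$-leading coefficients of the elements of $G$ together with the $\K[z]$-leading coefficients of all intermediate polynomials produced during Buchberger's execution. For any $\alp\in\K$ with $W(\alp)\neq 0$, the identities $\lmon{P|_\alp}=\lmon{P}$ and $S(P,Q)|_\alp=S(P|_\alp,Q|_\alp)$ recalled at the end of \S\ref{secGB} ensure that the same sequence of $S$-polynomial and reduction steps, when applied to the specialized generators of $\widetilde{I}_\alp$, produces exactly $G|_{z=\alp}$; this is therefore a \GB basis of $\widetilde{I}_\alp$ in $\K[\Xsoul,Y]$, and no element of it has leading monomial lying entirely in $Y_1,\ldots,Y_p$. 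A last application of Proposition~\ref{propelimGB} yields $J_\alp=\widetilde{I}_\alp\cap\K[Y_1,\ldots,Y_p]=\{0\}$. The main obstacle I anticipate is precisely this bookkeeping: one must track every $\K[z]$-leading coefficient arising during Buchberger's execution, and verify carefully, via the specialization formulas at the end of \S\ref{secGB}, that each algorithmic step commutes with evaluation at $z=\alp$ whenever $W(\alp)\neq 0$.
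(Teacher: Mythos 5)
Your approach is correct and takes a genuinely different route from the paper's. Instead of performing a linear change of coordinates inside $\K[\Xsoul]$ (the paper completes $\varphi_1,\ldots,\varphi_p$ to a basis $T_1,\ldots,T_N$ of the space of linear forms, with the completion depending on $\alp$ in part $(ii)$ and chosen generically in part $(i)$, and then eliminates $T_{p+1},\ldots,T_N$), you adjoin slack variables $Y_1,\ldots,Y_p$ and pass to the ideal $\widetilde I = I + (Y_1-\varphi_1,\ldots,Y_p-\varphi_p)$ in $\K[z,\Xsoul,Y_1,\ldots,Y_p]$. Your key identity $J_\alp = \widetilde I_\alp\cap\K[Y_1,\ldots,Y_p]$ is correct, and it nicely avoids the paper's case analysis on the rank $r$ of $\varphi_1(\alp,\Xsoul),\ldots,\varphi_p(\alp,\Xsoul)$ and on the completion indices $i_1,\ldots,i_{N-r}$, $j_1,\ldots,j_r$, at the modest cost of $p$ extra variables in the Gröbner computation. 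Both proofs then reduce part $(i)$ to showing that Buchberger's algorithm over $\K(z)$ specializes compatibly at $z=\alp$ once $W(\alp)\ne0$, with $W$ collecting numerators of leading coefficients. You rightly identify this synchronization as the crux and leave the bookkeeping to the reader; the paper spells it out in its Algorithm 2, which handles two points your sketch leaves implicit: it also inserts $\numlcoeff{P-Q}$ for each pair, so that $P\ne Q$ forces $P_\alp\ne Q_\alp$ and the pair loop does not desynchronize under specialization; and, for the conclusion that no specialized basis element lies in $\K[Y_1,\ldots,Y_p]$, it records a non-leading coefficient of each final basis element witnessing dependence on an eliminated variable. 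Your alternative, relying on $\lmon{P|_{z=\alp}}=\lmon{P}$, handles the second point directly provided the elimination order ranks the eliminated $X$-variables above $Y_1,\ldots,Y_p$ (so that any $P$ depending on some $X_j$ has leading monomial involving an $X_j$). In substance these are refinements within the same strategy, and your reduction via $\widetilde I$ is a valid and somewhat cleaner alternative to the paper's coordinate change.
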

\noindent {\em Nota Bene}: In assertion $(ii)$, we do not need to assume that 
$$ 
I  \cap \K[z, \varphi_1(z,\Xsoul),\ldots, \varphi_p(z,\Xsoul)] = \{0\};
$$ 
this assumption is needed in $(i)$ to ensure that $W$ is non-zero. Moreover, after computing $W$  in $(i)$, it is possible to apply $(ii)$ to all roots of $W$: this allows one to determine exactly the (finite) set of all $\alp\in\K$ such that $J_\alp\neq\{0\}$.

The polynomial $W$ plays the role of the polynomial $u_0$ in \cite{AR}.

\bigskip

In the rest of this section we shall prove Proposition \ref{proputile}.

\medskip

 We denote by  $I_\alp$  the ideal of $\K[\Xsoul]$ consisting in all polynomials $Q(\alp,\Xsoul)$ with $Q\in I$, and   by  $\chialp$   the linear map $\K[Y_1,\ldots,Y_p] \to \K[X_1,\ldots,X_N]$ defined by
$$\chialp(S(Y_1,\ldots,Y_p))  =  S( \varphi_1(\alp, \Xsoul), \ldots,  \varphi_p(\alp, \Xsoul) ).$$
Then we have $J_\alp = \chialp^{-1} (I_\alp)$.

\medskip

\begin{proof}[Proof of part $(ii)$ of Proposition \ref{proputile}]  
 Fix $\alp\in\K$. Denote by $r$ the dimension of the $\K$-vector space spanned by the linear forms $\varphi_j(\alp,\Xsoul)$ with $1\leq j \leq p$; we have $0\leq r \leq \min(p,N)$. There exist effectively computable indices $1\leq j_1 < \ldots < j_r\leq p$ and $1\leq i_1 < \ldots < i_{N-r} \leq N$ such that $\varphi_{j_
1}(\alp,\Xsoul)$, \ldots,  $\varphi_{j_r}(\alp,\Xsoul)$, $X_{i_1}$, \ldots,  $X_{i_{N-r}}$ is a basis of the $N$-dimensional vector space of $\K$-linear combinations of $X_1$,\ldots, $X_N$. In general there are several such tuples $(i_1,\ldots,i_{N-r},j_1,\ldots,j_r)$; we choose (arbitrarily) the least in lexicographical order. We let $T_1 = \varphi_{j_1}(\alp,\Xsoul)$, \ldots,  $T_r= \varphi_{j_r}(\alp,\Xsoul)$, $T_{r+1} = X_{i_1}$, \ldots,  $T_N = X_{i_{N-r}}$. In this way $T_1,\ldots,T_N$ are linearly independent linear forms in $X_1,\ldots,X_N$, and are therefore algebraically independent. We have $\K[\Xsoul] = \K[\Tsoul]$ where $\Tsoul$ stands for $T_1,\ldots,T_N$, and any polynomial in $\K[\Xsoul]$ can be written in a unique way as a polynomial in $T_1,\ldots,T_N$ with coefficients in $\K$. Algorithm 1 described in \S \ref{secGB}, with $\L=\K$ and $i=r$, enables one (starting with $Q_1(\alp,\Xsoul)$, \ldots,  $Q_\ell(\alp,\Xsoul)$) to compute a \GB basis of $I_\alp \cap \K[T_1,\ldots,T_r]  = I_\alp\cap\Im(\chialp)$. Each element of this \GB basis is of the form $P(T_1,\ldots,T_r)$, and we have
$$\chialp(P(Y_{j_1},\ldots,Y_{j_r})) = P(T_1,\ldots,T_r).$$
Let $\calB_1$ be the set of all polynomials $P(Y_{j_1},\ldots,Y_{j_r})$ for $P(T_1,\ldots,T_r)$ in this \GB basis; then $\chialp(\calB_1)$ is a set of generators of $I_\alp\cap \Im(\chialp)$. On the other hand, for each $j\in \{1,\ldots,p\}\setminus\{j_1,\ldots,j_r\}$ there exist scalars $\lambda_{j,t}\in\K$ (for $1\leq t \leq r$) such that $\varphi_{j}(\alp,\Xsoul)= \sum_{t=1}^r \lambda_{j,t} \varphi_{j_t} (\alp,\Xsoul)$; we let  $\calB_2$ be the set of all linear polynomials  $Y_j- \sum_{t=1}^r \lambda_{j,t} Y_{j_t} $ for $j\in \{1,\ldots,p\}\setminus\{j_1,\ldots,j_r\}$. Then $\calB_2$ is a set of generators of the ideal $\ker(\chialp)$, so that 
$\calB_1\cup\calB_2$ is a set of generators of the ideal $ J_\alp = \chialp^{-1} ( I_\alp )$. This set is empty if, and only if, $J_\alp=\{0\}$. This concludes the proof of part $(ii)$ of  Proposition \ref{proputile}.\end{proof}   

\medskip

\begin{proof}[Proof of part $(i)$ of Proposition \ref{proputile}]  
  We first point out that the algorithm   described above for part $(ii)$  depends on $\alp$ in many ways, through $r$, $i_1$, \ldots, $i_{N-r}$, $ j_1$, \ldots, $j_r$, and at each step of Algorithm~1 (whenever a remainder or a syzygy polynomial is computed, or the equality of two polynomials is tested).
We refer to the end of \S \ref{secGB} for examples where $\lmon{P(\alp,\Xsoul)}$ or $S( P(\alp,\Xsoul), Q(\alp,\Xsoul))$ depend on $\alp\in\K$. The general idea when several polynomials $P\in\K(z)[\Xsoul]$   are involved is that if none of their leading coefficients  vanishes at $\alp$, then everything goes smoothly: the leading monomial of each  $P(\alp, \Xsoul)$ is independent from $\alp$. Since the algorithm involves finitely many steps, 
 only finitely many polynomials are computed: the strategy is to compute a common multiple $W\in\K[z]\setminus\{0\}$ of the numerators of the leading coefficients of all polynomials $P\in\K(z)[T_1,\ldots,T_N]$ that appear during the algorithm. Then for any $\alp\in\K$ such that   
$W(\alp)\neq 0$,  the algorithm described above for part $(ii)$  takes place exactly in the same way, independently of $\alp$: actually it follows exactly the same steps as if it were worked out over  $\K(z)$. We refer to the end of \S \ref{sec7} for an example.

To make  this strategy more precise, let $M_0(z)=(m_{j,k}(z)) \in M_{p,N}(\K[z])$ denote the matrix defined by $\varphi_j(z,\Xsoul) = \sum_{k=1}^N m_{j,k}(z) X_k$ for any $j\in\{1,\ldots,p\}$. Since $\varphi_1$, \ldots, $\varphi_p$ are linearly independent over $\K[z]$, the matrix $M_0(z)$ has rank $p$: there exists a minor $W_0(z)$ of $M_0(z)$, of size $p$, which is not identically zero. If $\alp\in\K$ is such that $W_0(\alp)\neq 0$, then $M_0(\alp)$ has rank $p$ and the integer $r$ defined in the proof of part $(ii)$  (in terms of $\alp$) is equal to $p$. 

Let $ \iti_1 $,   \ldots,  $ \iti_{N-p}$ denote the indices of the columns of $M_0(z)$ which do not appear in the submatrix of which $W_0(z)$ is the determinant, with 
$ 1 \leq \iti_1 < \ldots < \iti_{N-p} \leq N$. Choosing $W_0(z)$ properly among all non-zero minors of $M_0(z)$ of size $p$, we may assume that $(\iti_1  ,   \ldots  ,  \iti_{N-p})$ is least possible with respect to lexicographic order (i.e., all tuples less than $(\iti_1  ,   \ldots  ,  \iti_{N-p})$ correspond to zero minors). Then for any $\alp\in\K$ such that $W_0(\alp)\neq 0$, we have $i_1 =\iti_1$, \ldots, $i_{N-r} = \iti_{N-p}$ where $i_1$, \ldots, $i_{N-r}$ have been constructed in terms of $\alp$ in the proof of part $(ii)$  (recall also  the equality $r=p$, already noticed). Moreover, by definition we have $j_1 = 1$, \ldots, $j_r = p$ for such an $\alp$.

As in the proof of part $(ii)$,  we let $T_1 = \varphi_1(z,\Xsoul)$, \ldots,  $T_p= \varphi_p(z,\Xsoul)$, $T_{p+1} = X_{\iti_1}$, \ldots,  $T_N = X_{\iti_{N-p  }}$. In this way, $ T_1$, \ldots, $T_N $ make up  a basis of the $\K(z)$-vector space generated by  $X_1,\ldots,X_N$, and are therefore algebraically independent over $\K(z)$; we have $\K(z)[\Xsoul] = \K(z)[\Tsoul]$ where $\Tsoul$ stands for $T_1,\ldots,T_N$. 
By definition of $W_0(z)$ and $\iti_1$, \ldots, $\iti_{N-p}$, each $X_i$ with $1\leq i \leq N$   can be written as $\sum_{k=1}^N \lambda_{i,k}(z) T_k$ with $W_0(z)  \lambda_{i,k}(z) \in \K[z]$. In particular we have $\K[z,\Xsoul] \subset \K[z,\frac1{W_0(z)},\Tsoul]$. 

Now we   run  Algorithm 2 below, in which all multivariate polynomials are seen in $\K(z)[\Tsoul]$; we use the notation of \S \ref{secGB} with $\L = \K(z)$ and $i=p$. The input involves the set $F$   of polynomials $Q_k(z,\Xsoul)$ with $1\leq k \leq \ell$ which generate $I$; they belong to $\K[z,\Xsoul] \subset \K[z,\frac1{W_0(z)},\Tsoul] \subset \K(z)[\Tsoul]$ but not necessarily to $\K[z][\Tsoul]$.  The leading coefficient of any non-zero  $P \in \K(z)[\Tsoul]$ is a non-zero rational function $R(z) = N(z)/D(z)$ with $N,D\in\K[z]\setminus\{0\}$, $\gcd(N,D)=1$ and $D$ monic. Its  numerator $N(z)$ is denoted by $\numlcoeff{P}$, and more generally we define $\num{R}$ in this way for any non-zero $R\in\K(z)$.

\medskip

Except for line 5 and the computation of $W(z)$, Algorithm 2 below follows exactly Buchberger's algorithm (i.e., Algorithm~1 over $\L = \K(z)$ without the last step): at the end, $G$ is a \GB basis of the ideal $\Iti$ of $\K(z)[\Tsoul]$ generated by the input $F$. In particular it terminates (we shall prove later that line $5a$ can be carried out).

\bigskip

\fbox{\begin{minipage}{0.9\textwidth}
Input: Integers $1\leq  p \leq N$, a non-zero polynomial $W_0\in\K[z]$ as above, and 
 a finite subset  $F$ of $\K[z,\frac1{W_0(z)},\Tsoul]\setminus\{0\}$.

Output: a non-zero polynomial $W(z)\in\K[z]$ as in part $(i)$ of Proposition \ref{proputile}.

1. $G \recoit F$

2. $W \recoit W_0$

3. For $P\in G$ do:

 \quad \quad \quad $W \recoit \lcm( W, \numlcoeff{P})$

4. Repeat:

\quad \quad $a.$ $G' \recoit G$

\quad \quad $b.$ For $P,Q\in G'$ with $P\neq Q$ do:

\quad \quad \quad \quad \quad $(i).$  $W \recoit \lcm( W, \numlcoeff{P-Q})$

\quad \quad  \quad\quad  \quad $(ii).$ $S \recoit S(P,Q)$
 
\quad \quad \quad   \quad \quad $(iii).$  If $S\neq 0$:
  
\quad \quad \quad   \quad \quad   \quad \quad \quad \quad $W \recoit \lcm( W, \numlcoeff{S})$

\quad \quad \quad  \quad \quad $(iv).$ While $S\neq 0$ and  there exists $P_1\in G'$ such that $\lmon{P_1} $ divides $\lmon{S}$:
 
\quad \quad \quad \quad     \quad\quad  \quad \quad $\kappa.$ $ S \recoit S - \frac{\lterm{S}}{\lterm{P_1}}P_1$
    
\quad \quad \quad \quad \quad    \quad \quad \quad $\eta.$ If $S\neq 0$:

\quad \quad \quad \quad \quad \quad \quad    \quad \quad    \quad \quad  $W \recoit \lcm( W, \numlcoeff{S})$
  
\quad \quad \quad  \quad \quad $(v).$ If $S\neq 0$: 
 
\quad \quad \quad \quad \quad    \quad \quad \quad $G \recoit G\cup\{S\}$

Until $G=G'$

5. For   $P\in G$ do:

\quad \quad $a.$ Find $\asoul\in\N^N$ such that $a_i\geq 1$ for at least one integer $i\geq p +1$ and the coefficient $\lambda_\asoul(z)$ of $\Tsoul^\asoul$ in $P(z,\Tsoul)$ is non-zero.

\quad \quad $b.$  $W \recoit \lcm( W, \num{\lambda_\asoul(z)})$

\end{minipage}}

\medskip

\begin{center}
Algorithm 2: Computation of $W(z)$ in part $(i)$.
\end{center}

\medskip

At the end of Algorithm 2, $W$ is a non-zero polynomial that we denote by $\Wend$. We shall now prove that $\Wend$ satisfies the property $(i)$ of Proposition \ref{proputile}.

\medskip

Notice that $\Wend$ is constructed by taking least common multiples repeatedly, so that at each step of the algorithm $W$ divides $\Wend$. We claim that throughout the algorithm, 
\begin{equation} \label{eq:claim1}
P \in \K\left[z,\frac1{\Wend(z)},\Tsoul\right] \quad \mbox{ and } \quad \numlcoeff{P} \mbox{ divides  $ \Wend$ for any $P\in G$.}
\end{equation}
 This is true at line 1 using line 3 and the assumption $F \subset  \K[z,\frac1{W_0(z)},\Tsoul]$ where $W_0$ divides $\Wend$ using line 2. Whenever a new element $S$ is added to $G$ on line $4b(v)$, it is constructed in lines $4b(ii)$ and $4b(iv)\kappa$ in such a way that $S\in   \K[z,\frac1{\Wend(z)},\Tsoul]$  (since on line $4b(iv)\kappa$, $P_1$ has been inserted in $G$ previously so that $\numlcoeff{P_1}$ divides $\Wend$), and $\numlcoeff{S} $ divides $\Wend$ using line $4b(iv)\eta$. This proves the claim.
 From now on, we fix   $\alp\in\K$  such that $\Wend(\alp)\neq 0$. Claim \eqref{eq:claim1} shows that at any step of the algorithm,
$$ \mbox{ for any } P(z,\Tsoul)\in G, \, \,  P(\alp, \Tsoul)  \mbox{ exists and  }  (\lcoeff{P})(\alp)\neq 0.$$ 
For any $Q = Q(z,\Tsoul)\in   \K[z,\frac1{\Wend(z)},\Tsoul] $, denote by $Q_\alp =Q(\alp, \Tsoul)\in\K[\Tsoul]$ the polynomial obtained by evaluating at $z=\alp$ (recall  that  $\Wend(\alp)\neq0$, so that $\alp$ is not a pole of any coefficient of $Q$).  Then at each step of the algorithm, for any $P\in G$, $P_\alp$ exists and we have $\lmon{P_{\alp}}=\lmon{P}$ since $(\lcoeff{P})(\alp)\neq 0$.
 In the same way, at each step, for any $P,Q\in G'$ with $P\neq Q$ we have $\lmon{P_\alp-Q_\alp} = \lmon{P-Q}$ so that $P_\alp\neq Q_\alp$ (using line $4b( i)$ to check that $\lmon{P-Q}(\alp)\neq 0$). Lines $4b (iii)$ and $4b(iv)\eta$ show that $\lmon{\widetilde{R}}=\lmon{R}$, where $R$ is the remainder in the weak division of $S(P,Q)$ by $G'$ computed over $\K(z)$, and $\widetilde{R}$   is  the remainder in the weak division of $S(P_\alp,Q_\alp)$ by the set of $H_\alp$ with $H\in  G'$ (where the weak division is computed following the same steps as the one of $S(P,Q)$ by $G'$). 
 
 Actually to obtain this property, we assume that at each step the set $G'$ is ordered in a deterministic way, and that elements $P_1\in G'$ are tested in this order so that the first one such that $\lmon{P_1}$ divides $\lmon{S}$ is used in line $4b(iv)$. In the same way we assume that the pairs $P,Q\in G'$ are taken in a deterministic order  at line $4b$. Then Algorithm 2 starting with $Q_1(z,\Xsoul)$, \ldots, $Q_\ell(z,\Xsoul)$ follows exactly the same steps as Algorithm 1 over $\L=\K$ starting with $Q_1(\alp,\Xsoul)$, \ldots, $Q_\ell(\alp,\Xsoul)$ -- recall that we assume $\Wend(\alp)\neq0$. In more precise terms, each time a polynomial $P$ is considered in Algorithm 2, the polynomial $P_\alp$ is considered at the same step of Algorithm 1, and we have $\lmon{P_\alp}=\lmon{P}$. 

At the end of Algorithm 2, $G = \{ \Pun (z,\Tsoul) , \ldots, \Ps (z,\Tsoul)\}$ is a \GB basis  of the ideal $\Iti$ of $\K(z)[\Tsoul]$ generated by $I$ because Algorithm 2 follows exactly the same steps as Algorithm 1 over $\L = \K(z)$.  Proposition \ref{propelimGB} (with $\L = \K(z)$ and $i =p$) shows that   the set $\calP$ of those $\Pj(z, \Tsoul)$ which depend only on $T_1$, \ldots, $T_p$ (and not on $T_{p+1}$, \ldots, $T_N$) is a  \GB  basis of $\Iti \cap \K(z)[T_1,\ldots,T_p]   = \Iti\cap \K(z)[\varphi_1,\ldots,\varphi_p]$. In part $(i)$ of Proposition \ref{proputile}, we assume that $I \cap \K[z][\varphi_1,\ldots,\varphi_p]=\{0\}$, so that $ \Iti\cap \K(z)[T_1,\ldots,T_p]=\{0\}$ and $\calP = \emptyset$. Therefore for each $j$ there exists $\asj\in\N^N$ such that $\asjindicei \geq 1$ for at least one $i\in \{p+1,\ldots,N\}$ and the coefficient $\lambda_{\asj}(z)$ of $\Tsoul^{\asj}$ in $\Pj(z,\Tsoul)$ is non-zero. This proves that line $5a$ of Algorithm~2 can be carried out. Moreover, since $\Wend(\alp)\neq 0$ we have $\lambda_{\asj}(\alp)\neq 0$ (using line $5b$), so that $\Pj(\alp, \Tsoul)\not\in\K[T_1,\ldots,T_p]$. 

Now since $\Wend(\alp)\neq0$, lines 1 and 2 of Algorithm 1 over $\L=\K$  run  exactly in the same way as lines 1--4 of Algorithm 2. Therefore   $\Pun(\alp,\Tsoul)$, \ldots, $\Ps(\alp,\Tsoul)$ is the output of lines 1--2 of Algorithm 1: it is a \GB basis of $I_\alp$. 
  Then   Proposition \ref{propelimGB} (with $\L=\K$ and $i=p$) shows that  the set of those  $\Pj(\alp,\Tsoul)$
 which  depend only on $T_1$, \ldots, $T_p$ and not on $T_{p+1}$, \ldots, $T_N$ is a \GB basis of $ I_\alp \cap \K[T_1,\ldots,T_p]$. Now we have seen that this set is empty (namely $\Pj(\alp, \Tsoul)\not\in\K[T_1,\ldots,T_p]$ for any $j$),  so that  $ I_\alp \cap \K[T_1,\ldots,T_p] = \{0\}$. Since $\Im(\chialp) =  \K[T_1,\ldots,T_p] $ we deduce that $J_\alp =  \chialp^{-1} ( I_\alp )$ is equal to $\ker(\chialp)$. Now $\Wend(\alp)\neq 0$ implies $W_0(\alp)\neq 0$, so that the linear forms $\varphi_1(\alp,\Xsoul)$, \ldots, $\varphi_p(\alp,\Xsoul)$ are linearly independent over $\K$: the map $\chialp$ is injective, and $J_\alp =   \{0\}$. This concludes the proof of  Proposition \ref{proputile}.\end{proof}

\section{Examples}\label{sec7}

In this section, we give two different illustrations of our algorithm. The first one illustrates Proposition \ref{propclef}, whereas the second one sheds light on  Proposition \ref{proputile} and Algorithm 2 used in its proof.

\medskip

Consider the $E$-functions $f(z)=e^{-iz}+(z-1)^2e^{z}$ and $f'(z)=-ie^{-iz}+(z^2-1)e^{z}$. We want do determine for which $\alpha \in \Qbar^*$ the numbers $f(\alpha)$ and $f'(\alpha)$ are algebraically dependent. We first observe that $f(z)$ and $f'(z)$ are $\Qbar(z)$-algebraically independent, because $1$ and $-i$ are $\mathbb Q$-linearly independent. To apply Beukers' lifting results, we set $g_1(z)=e^z$ and $g_2(z)=e^{-iz}$ which are such that
$$
\left(
\begin{matrix}
f
\\
f'
\end{matrix}
\right) = \left(
\begin{matrix}
(z-1)^2 & 1
\\
z^2-1 & -i
\end{matrix}
\right)
\left(
\begin{matrix}
g_1
\\
g_2
\end{matrix}
\right),\qquad 
\left(
\begin{matrix}
g_1
\\
g_2
\end{matrix}
\right)' = \left(
\begin{matrix}
1 & 0
\\
0 & -i
\end{matrix}
\right)
\left(
\begin{matrix}
g_1
\\
g_2
\end{matrix}
\right).
$$
We introduce the two $\Qbar[z]$-linearly independent linear forms
$$
\varphi_1(z, X_1, X_2)=(z-1)^2X_1+X_2, \quad \varphi_2(z, X_1, X_2)=(z^2-1)X_1-iX_2.
$$
Since $g_1, g_2$ are $\Qbar(z)$-algebraically independent (which our algorithm would have first determined), the ideal
$$
I:=\{Q\in \Qbar[z,X_1,X_2] \quad \textup{such that} \quad Q(z, g_1(z), g_2(z))\equiv 0\}
$$
is reduced to $\{0\}$. 

Let now $\alpha \in \Qbar^*$ be such that there exists $S\in \Qbar[X,Y]\setminus\{0\}$ such that $S(f(\alpha), f'(\alpha))=0$. By Proposition~\ref{propclef}, this is equivalent to the fact that $S(\varphi_1(\alpha, X_1, X_2), \varphi_2(\alpha, X_1, X_2))=Q(\alpha, X_1, X_2)$ for some $Q\in I$, i.e. that 
$$
S\big((\alpha-1)^2X_1+X_2, (\alpha^2-1)X_1-iX_2\big) \equiv 0
$$
as a polynomial in $\Qbar[ X_1, X_2]$. Hence 
$$
S(f(\alpha), f'(\alpha))=0 \Longleftrightarrow  S\big((\alpha-1)^2X_1+X_2, (\alpha^2-1)X_1-iX_2\big) \equiv 0 \;  \textup{in}\; \Qbar[X_1, X_2].
$$

We now set 
$$
D(\alpha):=\left \vert \begin{matrix} (\alpha-1)^2 &1\\ \alpha^2-1 & -i\end{matrix}\right\vert. 
$$
If on the one hand, $D(\alpha)\neq 0$, the linear forms $(\alpha-1)^2X_1+X_2$ and $(\alpha^2-1)X_1-iX_2$ are $\Qbar$-linearly independent so that $S(X,Y)$ must in fact be identically zero in $\Qbar[X,Y]$. In other words, the numbers $f(\alpha)$ and $f'(\alpha)$ are $\Qbar$-algebraically independent.

If on the other hand $D(\alpha) = 0$, i.e. if $\alpha=1$ or $\alpha=i$, the linear forms $(\alpha-1)^2X_1+X_2$ and $(\alpha^2-1)X_1-iX_2$ are $\Qbar$-linearly dependent. If $\alpha=1$, we must have $S(X_2, -iX_2)\equiv 0$, which means that $S(X,Y)$ is in the principal ideal $(iX+Y)$ of $\Qbar[X,Y]$. If $\alpha=i$, we must have $S(-2i X_1+X_2, -2X_1-iX_2)\equiv 0$, which means that $S(X,Y)$ is again in the principal ideal $(iX+Y)$ of $\Qbar[X,Y]$. In both cases, we can indeed take $S(X,Y)=iX+Y$ because it is readily checked that $f(1)=if'(1)$ and $f(i)=if'(i)$. Note that $f(1)=e^{-i}$ and $f(i)=e+(i-1)^2e^i$ are both transcendental by the Lindemann-Weierstrass Theorem, and this could  also be proved by our algorithm or by that in \cite{AR}.

With the notations of \S\S \ref{sec:reduc23commalg} and \ref{sec6}, we have $N=p=2$ and the polynomial $W_0(z)$ defined in the proof of Proposition \ref{proputile} is equal to $D(z)$. Since $I$ is the zero ideal, it is generated by $F = \emptyset$. Running Algorithm 2  with this input is trivial: the output is $W(z)=W_0(z)=D(z)$. Moreover for each root $\alpha$ of this polynomial, the algorithm computes the linear relation $f(\alpha) = i f' (\alpha)$.

\bigskip

As a second illustration, consider the transcendental $E$-function, of hypergeometric type,
$$
f(z):={}_1F_2\left[\begin{matrix} 1/2\\
1/3, 2/3 \end{matrix};z^2\right] = \sum_{n=0}^\infty \frac{(2n)!}{n!(3n)!} \left(\frac{27z^2}4\right)^{n}.
$$
It is a solution of the differential equation (of minimal order for $f$)
\begin{equation}\label{eq:JAW}
9z^2y'''(z)+9zy''(z)-(36z^2+1)y'(z)-36zy(z)=0.
\end{equation}
A basis of local solutions at $z=0$ of \eqref{eq:JAW} is given by
$$
f(z), \quad z^{2/3}\, {}_1F_2\left[\begin{matrix} 5/6\\
2/3, 4/3 \end{matrix};z^2\right], \quad z^{4/3}\, {}_1F_2\left[\begin{matrix} 7/6\\
4/3, 5/3 \end{matrix};z^2\right].
$$
Using the algorithm in \cite{bcvw}, J.-A. Weil confirmed to us that the  
 differential Galois group~(\footnote{The possible differential Galois groups of hypergeometric equations  are classified in \cite{katzlivre}.}) of \eqref{eq:JAW} is $SO(3,\mathbb C)$ and the ideal of polynomial relations in $\Qbar[z][X_1, X_2, X_3]$ between $f(z), f'(z), f''(z)$ is principal, generated by the first integral
\begin{equation}\label{eq:JAW2}
f(z)^2-\frac14f'(z)^2+\frac{9z^2}{4} \big(4f(z)-f''(z)\big)^2 = 1.
\end{equation} 
In particular, $f(z)$ and $f'(z)$ are $\Qbar(z)$-algebraically independent. Let us prove, following our algorithm, that $\alpha=0$ is the only algebraic number such that $f(\alpha)$ and $f'(\alpha)$ are algebraically  dependent over $\Qbar$. 

We assume that Feng's algorithm provides the generator 
$$Q(z,\Xsoul) = X_1^2 - \frac14 X_2^2 + \frac{9z^2}4 (4X_1-X_3)^2 - 1$$
of the ideal
$$I = \{T\in\Qbar[z,X_1,X_2,X_3], \, T(z,f(z),f'(z),f''(z))=0\}.$$
Let us follow Algorithm 2 that appears in the proof of Proposition \ref{proputile}  (see \S \ref{sec6}), with $N=3$, $p=2$, $\varphi_1(z,\Xsoul) = X_1$,  $\varphi_2(z,\Xsoul) = X_2$, and $T_i=X_i$. The input is $F=\{Q\}$ and $W_0=1$. The second elimination order is such that $X_1^2 > X_2^2 > X_1X_3 > X_3^2$ so that $\lcoeff{Q}=1+36z^2$. After Step 3 of Algorithm 2 we have $W(z) = z^2+\frac1{36}$ by choosing the monic least common multiple. Then $W$ does not change at Step 4 since $G=F=\{Q\}$ contains only one element. In Step 5$a$ we may choose $\asoul = (0,0,2)$; then after line 5$b$ we have $W(z) = z^2( z^2+\frac1{36})$. This is the polynomial we obtain such that property $(i)$ of  Proposition \ref{proputile}  holds. Therefore if $\alpha\in\Qbar\etoile$ is such that  that $f(\alpha)$ and $f'(\alpha)$ are algebraically  dependent over $\Qbar$, then $\alpha$ is a root of $W$:  $\alpha=\pm i/6$. 

Now we follow the proof of  part $(ii)$ of  Proposition \ref{proputile} to determine, for each of these values of  $\alpha$, whether $f(\alpha)$ and $f'(\alpha)$ are algebraically  dependent or not.  For $\alpha=\pm i/6$, the ideal $I_\alpha$ defined after the statement of Proposition \ref{proputile} is generated by
$$Q_\alpha(\Xsoul) =Q(\alpha,\Xsoul) =  - \frac14 X_2^2- \frac1{16}X_3^2 + \frac12  X_1 X_3 - 1.$$
Since $Q_\alpha\not\in\Qbar[T_1,T_2]=\Qbar[X_1,X_2] $ we obtain that the empty set is a \GB basis of  the ideal $J_\alpha$ defined in \S \ref{sec6}, so that it is equal to $\{0\}$: $f(\alpha)$ and $f'(\alpha)$ are algebraically  independent over $\Qbar$. This concludes the proof that $0$ is the only algebraic number $\alpha$ such that $f(\alpha)$ and $f'(\alpha)$ are algebraically  dependent.

We point out that the apparently exceptional points $ \alpha=\pm i/6$ have appeared in the above computation because the leading monomial of 
$Q_\alpha(\Xsoul)$ is  equal to  $X_2^2$ for these values, whereas it is $X_1^2$ for $\alpha\neq\pm i/6$. This illustrates the general situation in the proof of  part $(i)$ of  Proposition \ref{proputile}: all agebraic points $\alpha$ where the computations take place in a non-generic way are gathered in the set of roots of the polynomial $W$; then part $(ii)$ enables one, for each such $\alpha$, to see whether the $E$-functions under consideration take algebraically dependent values at $\alpha$ or not.

\medskip

\noindent S. Fischler, Laboratoire de Math\'ematiques d'Orsay, Univ. Paris-Sud, CNRS, Universit\'e 
Paris-Saclay, 91405 Orsay, France.

\medskip

\noindent T. Rivoal, Institut Fourier, CNRS et Universit\'e Grenoble Alpes, CS 40700, 38058 Grenoble cedex 9, France.

\medskip

\noindent Keywords: $E$-functions, Algebraic independence, Differential equations, \GB bases, 
Algorithm.

\medskip

\noindent MSC 2000: 11J91, 13P10, 33E30, 34M05.

\end{document}